\documentclass[11pt]{article}

\usepackage{epsfig,amsmath,latexsym,amssymb, amsthm, enumerate, mathtools}
\usepackage{graphicx}
\usepackage{lscape}
\usepackage{picture, eso-pic, tikz} %% for gray boxes
\usepackage{dsfont}
\usepackage{listings}
\usepackage{changes}
\usepackage{hyperref}
\usepackage{bbding}
\usepackage{booktabs}
\usepackage{comment}
\usepackage{multicol}
\usepackage{tikz}
\usepackage[ruled,vlined]{algorithm2e}

\usepackage{float}
\usepackage{caption}
\usepackage{pifont}

\renewcommand{\baselinestretch}{1.1}
\def\R{{\mathbb R}}  %%
\def\N{{\mathbb N}}  %%
\def\p{{\mathbb P}}  %% 
\def\E{{\mathbb E}}  %
\def\X{{\mathcal X}}  %%
\def\l{{\ell}}  %%
\def\D{{\mathcal D}}  %%
\def\T{{\mathcal T}}  %%
\def\V{{\mathcal V}}  %%
\def\B{{\mathcal B}}  %%
\def\DD{{\mathfrak D}}  %%
\def\TT{{\mathfrak T}}  %%
\def\BB{{\mathfrak B}}  %%
\def\LL{{\mathfrak L}}  %%
\def\Beweis{\footnotesize}
\DeclareMathOperator*{\argmin}{arg\,min}

\newcommand{\Remm}[1]{}
\newtheorem{theo}{Theorem}[section]

\newtheorem{prop}[theo]{Proposition}

\newtheorem{cor}[theo]{Corollary}
\newtheorem{defi}[theo]{Definition}
\newtheorem{model ass}[theo]{Model Assumptions}

\newtheorem{example}[theo]{Example}

\newtheorem{rem}[theo]{Remark}
\def\EndProof{\hfill {\scriptsize $\Box$}}
\def\EndExample{\hfill {\scriptsize $\blacksquare$}}
\numberwithin{equation}{section}

\definecolor{MyGray}{rgb}{0.92,0.92,0.92}


\definecolor{British racing}{rgb}{0.0, 0.5, 0.0}
\def\bx{\boldsymbol{x}}
\def\bb{\boldsymbol{b}}

\def\bX{\boldsymbol{X}}

\def\b0{\boldsymbol{0}}

\def\bmu{\boldsymbol{\mu}}

\def\b0{\boldsymbol{0}}

\def\bb{\boldsymbol{b}}

\begin{document}
\author{Selim Gatti\footnote{RiskLab, Department of Mathematics, ETH Zurich,
selim.gatti@math.ethz.ch}}

\date{Version of \today}
%\date{Version of February 24, 2026}
\title{Assessing model calibration with boosting trees}
\maketitle

\begin{abstract} %\bl{\tt to be done.}
The main goal in regression modelling consists in approximating the conditional mean of a response given a set of features. A regression function is said to be calibrated if the resulting mean estimates match the true conditional means for almost every set of features. Aiming for calibration seems not achievable in practice as one typically deals with finite samples of noisy observations. A weaker notion of calibration is auto-calibration, and it means that the expectation
of responses being given the same mean estimate matches this estimate.  This notion is important, e.g., in insurance pricing as it ensures no cross-subsidization between different price cohorts. In this paper, we show that boosting trees can be used to test necessary conditions for calibration and auto-calibration, respectively. The practical relevance of our approach is supported by a numerical example, in which the proposed tests prove to be very powerful on a large insurance dataset. 

\vspace{0.5cm}
\noindent
%{\bf Keywords.} \bl{\tt to be done}
{\bf Keywords.} calibration, auto-calibration, regression modelling, mean estimation, insurance pricing.
\end{abstract}

\section{Introduction}

In regression modelling, the primary objective is to approximate the true conditional mean of a response given a set of features. To this end, various statistical models are used to fit a regression function that provides a mean estimate for each single set of features. This function is said to be {\it calibrated} if the resulting mean estimates match the true conditional means for almost all features. Aiming for calibration seems not achievable in practice as models are fitted on finite samples of noisy observations. %Statistical tests for calibration can then be used to evaluate the accuracy of these models. Such tests have long been studied in the literature and include, for example, the Kolmogorov–Smirnov test introduced by Kolmogorov \cite{Kolmogorov} and Smirnov \cite{Smirnov}, which consists in evaluating the distance between the empirical distribution of the responses and the corresponding theoretical distribution under the null hypothesis of calibration. We refer to Chapter 16 of Lehmann--Romano \cite{Lehmann--Romano} for a deeper overview on calibration tests.
A weaker notion of calibration is {\it auto-calibration} (sometimes also called {\it mean-calibration} or {\it well-calibration}); see, for example, Krüger--Ziegel \cite{Kruger} and Denuit et al.~\cite{Denuit2}. This notion goes back to earlier works on the reliability of probabilistic forecasts in meteorology; we refer to Bross \cite{Bross}, Sanders \cite{Sanders} and Murphy--Winkler \cite{Murphy--Winkler}.
%DeGroot--Fienberg \cite{DeGroot--Fienberg}. %An auto-calibrated regression function satisfies that the expected value of the responses being given the same mean estimate matches this estimate.
It means that when responses are grouped according to their mean estimates, the average of the responses within each group matches this estimate. This property is important in various applications where sums of mean estimates have to match sums of responses at a global and local level. This is, for example, the case in insurance pricing as an auto-calibrated pricing system avoids systematic cross-subsidy between different price cohorts; we refer the reader to Pohle \cite{Pohle}, Denuit et al.~\cite{Denuit}, Fissler et al.~\cite{Fissler} and Wüthrich--Merz \cite{WM2023}.

Testing for calibration has long been studied in the literature. For example, Bierens \cite{Bierens} considers integrated conditional moment tests, whereas Härdle--Mammen \cite{Härdle--Mammen} introduce a test based on the squared distance between the estimated regression function and a kernel-based approximation of the true conditional means. These approaches typically involve a complex construction and require several tuning choices. More recently, Delong et al.~\cite{Delong} propose a considerably simpler approach based on calibration bands for the special case of responses belonging to the exponential dispersion family.

In contrast to testing for calibration, the development of statistical tests for assessing auto-calibration has, surprisingly, received comparatively limited attention and has only emerged recently. Denuit et al.~\cite{Denuit2} propose a test for auto-calibration using Lorenz and concentration curves that requires the evaluation of a non-explicit asymptotic distribution using Monte-Carlo simulations. Wüthrich \cite{Wüthrich} provides simpler versions of this test for discrete and finite regression functions. Additionally, Delong--Wüthrich \cite{Delong--Wüthrich} consider the use of bootstrap techniques. For the special case of binary responses, Hosmer--Lemeshow \cite{Hosmer--Lemeshow} derive a $\chi^2$-test by binning observations over disjoint intervals, whereas Gneiting--Resin \cite{Gneiting} propose a bootstrap approach to test for auto-calibration in this binary setup. Moreover, Delong--Wüthrich \cite{Delong--Wüthrich2} consider the use of universal inference for responses belonging to the exponential dispersion family.

%while Delong et al.~\cite{Delong} propose the use of calibration bands to construct statistical tests for, both, calibration and auto-calibration.
%In the special case of binary observations, Hosmer--Lemeshow \cite{Hosmer--Lemeshow} derive a $\chi^2$-test by binning observations over disjoint intervals, whereas Gneiting--Resin \cite{Gneiting} propose a bootstrap approach to test for auto-calibration in this binary setup. 
%Assessing the auto-calibration of a regression function can be done graphically using {\it actual vs.~predicted} plots; see Wüthrich et al.~\cite{Wüthrich2}. These plots map the empirical mean of binned responses against their corresponding mean estimates. Moreover, s

We take a different approach in this paper. Our goal is to construct statistical tests for calibration and auto-calibration by only considering necessary conditions, which arise from the orthogonal projection property of conditional expectation. In fact, such an approach has already been followed by Fissler et al.~\cite{Fissler}, who propose to perform joint Wald tests in order to test for a finite number of necessary conditions for calibration and auto-calibration, respectively. In practice, the use of their tests generally leads to low power as on the one hand, one needs multiple necessary conditions in order to be able to detect various kinds of violations of these properties, and on the other hand, the power of their tests is decreasing with an increasing amount of necessary conditions. 

Our contribution is to show that boosting can be employed to test for calibration and auto-calibration by only assessing a single necessary condition, respectively, while being able to detect multiple kinds of violations of these two properties. Boosting is a class of ensemble methods that iteratively combines {\it weak learners}, i.e., simple models, to decrease the size of the residuals at each iteration. Its theoretical foundations were originally established by Valiant \cite{Valiant} and Kearns--Valiant \cite{Kearns--Valiant}. It was then popularized by Freund--Schapire \cite{Freund--Schapire} who came up with the AdaBoost algorithm for classification problems, and Friedman \cite{Friedman} later extended that framework to regression modelling using gradient-based methods. In the present work, we use trees as weak learners to detect violations of calibration and auto-calibration for a given regression function. These violations then enable us to derive a suitable necessary condition for each of the two properties above, which we assess through statistical tests with fully tractable asymptotic distributions. Through a numerical example, we show that although only a single necessary condition for calibration and auto-calibration, respectively, is assessed, the proposed testing procedures achieve a high power on a large motor third party liability insurance dataset.
%That is, although only one necessary condition for calibration and auto-calibration, respectively, is assessed, the proposed testing procedure leads to highly powerful calibration and auto-calibration tests. %This observation is confirmed by test different models to a simulated motor third party liability dataset 

\medskip
{\bf Organization.} The remainder of this manuscript is organized as follows. In the next section, we formally introduce the notions of calibration and auto-calibration, and present equivalent characterizations for both properties. Then, in Section \ref{test procedure}, we propose new testing procedures to assess these properties using boosting trees. Finally, in Section \ref{sec num example}, we show that the proposed testing procedures achieve a high power on a motor third party liability insurance dataset. The last section concludes this work. %All mathematical proofs are provided in the appendix.

\section{Calibration and auto-calibration}

\label{sec cal auto cal}
\subsection{Mean estimation in regression modelling}

Denote by $Y$ the response variable and by $\bX$ the vector of features whose support is given by a feature space $\X$. Moreover, let $(\Omega, \mathcal{F}, \p)$ be the underlying probability space and assume that the response $Y$ is square integrable. The mean estimation task in regression modelling consists in approximating the (unknown) {\it true conditional mean}
$$
\mu^{*} : \X \to \R, \quad \bx \mapsto \E[Y \, |\, \bX = \bx].
$$
For this, statistical models are used to fit a regression function $\widehat{\mu} : \X \to \R$ on some learning dataset. %that aims to be as close as possible to the true conditional mean $\mu^{*}:\X \to \R$. 
In practice, this fitting procedure is typically done by minimizing strictly consistent loss functions for mean estimation, which are functions whose minimum is only attained by the true conditional mean; we refer the reader to Gneiting \cite{Gneiting2} and Gneiting--Raftery \cite{Gneiting--Raftery}. The goal of this section is to introduce two different notions of calibration for the fitted regression function $\widehat{\mu} : \X \to \R$ and present equivalent formulations of these properties.

\subsection{The calibration property}

A regression function is calibrated if it matches the true conditional mean for almost every (a.e.) set of features, i.e.,
$$
\widehat{\mu}(\bx) = \mu^*(\bx), \quad \textrm{for a.e. } \bx \in \X. 
$$
Using the response $Y$ and the vector of features $\bX$, we define this property as follows.
\begin{defi}
    \label{def calibration}
    A regression function $\widehat{\mu}: \X \to \R$ is calibrated for $(Y, \bX)$ if 
    $$
    \widehat{\mu}(\bX) = \E[Y \, |\, \bX], \quad \p\textrm{-a.s.}
    $$
\end{defi}
It is important to note that the conditional expectation $\E[Y \, |\, \bX]$ is defined up to $\p\textrm{-}$nullsets, and that the measure theoretical definition of conditional expectation, see Definition 23.4 in Jacod--Protter \cite{Jacod--Protter}, allows us to rewrite the above definition as follows. A calibrated regression function $\widehat{\mu}: \X \to \R$ satisfies
\begin{equation}
    \label{cond exp calibration}
    \E[Y \, \mathds{1}_S] = \E[\widehat{\mu}(\bX) \, \mathds{1}_S],
\end{equation}
for all measurable sets $S \in \sigma(\bX)$. The following proposition can then be derived from this characterization.
\begin{prop}
    \label{prop equiv cal}
    A regression function $\widehat{\mu}: \X \to \R$ is calibrated for $(Y, \bX)$ if and only if
    \begin{equation}
        \label{eq prop cal}
    \E\left[\left(Y- \widehat{\mu}(\bX)  \right) g(\bX) \right] = 0,
    \end{equation}
    for all measurable functions $g$ such that $g(\bX) \in L^2(\p).$
\end{prop}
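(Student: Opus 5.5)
The plan is to prove the two implications separately, using the characterization \eqref{cond exp calibration} of calibration via indicator functions as the pivot. Throughout, I will assume (as is implicit in the statement) that $\widehat{\mu}(\bX) \in L^2(\p)$. Then $Y - \widehat{\mu}(\bX) \in L^2(\p)$, and every expectation in \eqref{eq prop cal} is finite by Cauchy--Schwarz.

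For the direction ``calibrated $\Rightarrow$ \eqref{eq prop cal}'', I will use the orthogonal projection property of conditional expectation. Take any measurable $g$ with $g(\bX)\in L^2(\p)$. The random variable $g(\bX)$ is $\sigma(\bX)$-measurable and the product $(Y-\widehat{\mu}(\bX))g(\bX)$ is integrable. So I can apply the tower property and ``take out what is known'':
$$
\E\left[(Y-\widehat{\mu}(\bX))g(\bX)\right] = \E\left[g(\bX)\,\E\left[Y-\widehat{\mu}(\bX)\,|\,\bX\right]\right] = \E\left[g(\bX)\left(\E[Y\,|\,\bX]-\widehat{\mu}(\bX)\right)\right]=0,
$$
where the last equality uses Definition \ref{def calibration}. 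An alternative route avoids the pull-out property for $L^2$ functions. One starts from \eqref{cond exp calibration}, extends by linearity to simple $\sigma(\bX)$-measurable functions, and then passes to general $g(\bX)\in L^2$ by $L^2$-approximation with simple functions, with the limit justified by continuity of the inner product. I would mention this as the more elementary path.

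For the converse, I will specialize $g=\mathds{1}_B$ for a Borel set $B$. Then $g(\bX)=\mathds{1}_{\{\bX\in B\}}$ is bounded and hence in $L^2$. Every $S\in\sigma(\bX)$ has the form $\{\bX\in B\}$, so \eqref{eq prop cal} yields exactly \eqref{cond exp calibration} for all $S\in\sigma(\bX)$. Since $\widehat{\mu}(\bX)$ is $\sigma(\bX)$-measurable and integrable, the defining property of conditional expectation (Definition 23.4 in Jacod--Protter) together with its a.s.\ uniqueness gives $\widehat{\mu}(\bX)=\E[Y\,|\,\bX]$ $\p$-a.s. This is calibration. To spell out the uniqueness step: set $Z=\widehat{\mu}(\bX)-\E[Y\,|\,\bX]$, note $\E[Z\mathds{1}_S]=0$ for all $S\in\sigma(\bX)$, and choose $S=\{Z>0\}$ and $S=\{Z<0\}$.

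The only real obstacle is bookkeeping on integrability in the first direction: we must ensure $\widehat{\mu}(\bX)\in L^2$ so that the product with $g(\bX)$ is integrable and the conditioning step is legitimate. I will state this assumption explicitly at the start. If it is not granted, I would restrict to bounded $g$ in the converse, which is all that direction needs. Everything else is standard measure theory.
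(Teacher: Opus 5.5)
Your proof is correct, but it takes a somewhat different route from the paper. The paper argues in one line through the Hilbert-space picture. Since $Y\in L^2(\p)$, $\E[Y\,|\,\bX]$ is the orthogonal projection of $Y$ onto the closed subspace $L^2(\sigma(\bX))$. An element of that subspace is this projection if and only if its residual is orthogonal to every $g(\bX)\in L^2(\sigma(\bX))$. This gives both implications at once, but it tacitly places $\widehat{\mu}(\bX)$ in $L^2$.

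You work instead with the defining property of conditional expectation \eqref{cond exp calibration}. The forward direction uses the tower and pull-out properties, which amounts to the orthogonality half of the projection theorem. The converse tests only against indicators $g=\mathds{1}_B$ and then invokes a.s.\ uniqueness of conditional expectation.

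Your route gives a sharper converse. Indicator test functions already suffice, and only integrability of $\widehat{\mu}(\bX)$ is needed. The paper's route gives brevity and ties directly to the orthogonal-projection motivation it later uses to justify testing a single direction $g_{\ell,\mathcal{B}}$.

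One small correction to your bookkeeping: the standing assumption $\widehat{\mu}(\bX)\in L^2(\p)$ can be dropped.
\begin{itemize}
\item Under calibration, $\widehat{\mu}(\bX)=\E[Y\,|\,\bX]\in L^2(\p)$ automatically, because $Y\in L^2(\p)$.
\item In the converse, taking $g\equiv 1$ in \eqref{eq prop cal} already forces $Y-\widehat{\mu}(\bX)\in L^1(\p)$, since an expectation equal to $0$ must be finite. That is all your indicator-and-uniqueness argument uses.
\end{itemize}
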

The proof of this result follows from the property that the conditional expectation $\E[Y \, | \, \bX]$ minimizes the $L^2$-distance to $Y$ over the space of $\sigma(\bX)$-measurable functions. We refer the interested reader to Chapter 23 of Jacod--Protter \cite{Jacod--Protter}. 

\subsection{The auto-calibration property}

Calibration is a strong property that requires the fitted regression function to coincide with the true conditional mean for a.e.~set of features $\bx \in \X$. A related notion is {\it auto-calibration}. It is defined as follows, see, e.g., Krüger--Ziegel \cite{Kruger}.

\begin{defi}
    \label{def auto calibration}
    A regression function $\widehat{\mu}:\X \to \R$ is auto-calibrated for $(Y, \bX)$ if
    \begin{equation*}
        \widehat{\mu}(\bX) = \E[Y \, | \, \widehat{\mu}(\bX)], \quad \p\textnormal{-a.s.}
    \end{equation*}
\end{defi}

The auto-calibration property means that by conditioning on a given mean estimate, the conditional expectation of the response matches this estimate. While this definition seems to be close to Definition \ref{def calibration}, the difference lies in the conditioning that now takes place with respect to the mean estimate $\widehat{\mu}(\bX)$ instead of the vector of features $\bX$, i.e., with respect to a coarser $\sigma$-algebra $\sigma(\widehat{\mu}(\bX)) \subseteq \sigma(\bX)$. Moreover, note that auto-calibration is a weaker property than calibration as for any calibrated regression function $\widehat{\mu}:\X \to \R$, we have
\begin{equation*}
    %\label{cal implies auto-cal}
    \E[Y \, | \, \widehat{\mu}(\bX)] = \E[\E[Y \, | \, \bX] \, | \, \widehat{\mu}(\bX)] = \E[\mu^{*}(\bX) \, | \, \widehat{\mu}(\bX)] = \widehat{\mu}(\bX), \quad \p\textrm{-a.s.},
\end{equation*}
where we used in the first equality that $\sigma(\widehat{\mu}(\bX)) \subseteq \sigma(\bX)$ and in the last equality, that the regression function $\widehat{\mu}:\X \to \R$ is calibrated. As above, Definition \ref{def auto calibration} can equivalently be expressed using the measure theoretical definition of conditional expectation as follows. An auto-calibrated regression function $\widehat{\mu}:\X \to \R$ satisfies
\begin{equation}
\label{cond exp auto calibration}
    \E[Y \, \mathds{1}_S] = \E[\widehat{\mu}(\bX) \, \mathds{1}_S],
\end{equation}
for all measurable sets $S \in \sigma(\widehat{\mu}(\bX))$. The latter $\sigma$-algebra contains all measurable sets that can be uniquely defined by the values of the regression function. In particular, an auto-calibrated regression function has to satisfy
\begin{equation}
    \label{reformulation auto-cal}
    \E[Y \, \mathds{1}_{\{a \leq \widehat{\mu}(\bX) \leq b\}}] = \E[\widehat{\mu}(\bX) \, \mathds{1}_{\{a \leq \widehat{\mu}(\bX) \leq b\}}],
\end{equation}
for any values $a \leq b \in \R \cup \{\pm \infty\}$. Interestingly, by setting $a = - \infty$ and $b = \infty$, we retrieve the {\it global unbiasedness} property
$$
    \E[Y] = \E[\widehat{\mu}(\bX)],
$$
which shows that auto-calibration is a stronger property than global unbiasedness. %Moreover, by choosing $a = b = m$ for any value $m$ satisfying $\p(\widehat{\mu}(\bX) = m) > 0$, \eqref{reformulation auto-cal} implies
%$$
%m = \E[Y \, | \, \widehat{\mu}(\bX) = m].
%$$
%The measure theoretical definition of conditional expectation allows us to better interpret Definition \ref{def auto calibration} because \eqref{cond exp auto calibration} means that for an auto-calibrated regression function, the mean estimates are unbiased both globally and locally, where the latter term refers to subparts of the feature space that can solely be delimited by the values of the regression function. 

The characterization in \eqref{cond exp auto calibration} allows us to better interpret Definition \ref{def auto calibration}, because it means that the mean estimates provided by an auto-calibrated regression function are unbiased both globally and locally, where the latter term refers to subparts of the feature space that can solely be delimited by the values of the regression function. This property is of particular interest in applications where mean estimates within given specific groups have to be unbiased. This is for example the case in insurance pricing, where auto-calibration of a pricing system is a minimal requirement, as it ensures that each cohort of individuals paying a certain price is on average self-financing.
%; we refer to Wüthrich--Ziegel \cite{Wuethrich_Ziegel}. 
Similar to Proposition \ref{prop equiv cal}, the next result provides an equivalent formulation for auto-calibration.

\begin{prop}
    \label{prop equiv auto-cal}
    A regression function $\widehat{\mu}: \X \to \R$ is auto-calibrated for $(Y, \bX)$ if and only if
    \begin{equation}
        \label{eq prop auto-cal}
        \E\left[\left(Y- \widehat{\mu}(\bX)  \right) h(\widehat{\mu}(\bX)) \right] = 0,
    \end{equation}
    for all measurable functions $h$ such that $h(\widehat{\mu}(\bX)) \in L^2(\p).$
\end{prop}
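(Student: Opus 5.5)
The plan is to reduce the statement to the defining property of the conditional expectation $\E[Y \, | \, \widehat{\mu}(\bX)]$, using the characterization \eqref{cond exp auto calibration} in one direction and an $L^2$ approximation argument in the other. Throughout I write $Z := \widehat{\mu}(\bX)$ and implicitly assume $Z \in L^2(\p)$; this is needed anyway for \eqref{eq prop auto-cal} to make sense with, e.g., $h(z)=z$. By the Doob--Dynkin lemma, the $\sigma(Z)$-measurable random variables are exactly those of the form $h(Z)$ with $h$ measurable. Hence the class of test functions in \eqref{eq prop auto-cal} is precisely $L^2(\Omega, \sigma(Z), \p)$.

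For the ``only if'' direction, assume auto-calibration, so $Z = \E[Y \, | \, Z]$ a.s. Let $h(Z) \in L^2(\p)$. Since $Y \in L^2$ and $Z\in L^2$, the product $(Y-Z)h(Z)$ is integrable by Cauchy--Schwarz. By the tower property and ``taking out what is known'',
\[
\E[(Y-Z)h(Z)] = \E\big[h(Z)\,\E[Y-Z \, | \, Z]\big] = \E\big[h(Z)\,(\E[Y\,|\,Z]-Z)\big] = 0 .
\]
Alternatively, one can start from \eqref{cond exp auto calibration}, extend it by linearity to simple $\sigma(Z)$-measurable functions, and pass to general $h(Z)\in L^2$ via $L^2$-density of simple functions together with continuity of $W\mapsto \E[(Y-Z)W]$ on $L^2$.

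For the ``if'' direction, assume \eqref{eq prop auto-cal} holds for all admissible $h$. Take $h = \mathds{1}_B$ for Borel sets $B$; these are bounded, hence in $L^2$. Since every $S \in \sigma(Z)$ has the form $\{Z\in B\}$, this gives $\E[Y\mathds{1}_S]=\E[Z\mathds{1}_S]$ for all $S\in\sigma(Z)$. Now $Z$ is $\sigma(Z)$-measurable and integrable, so by uniqueness of conditional expectation (Definition 23.4 in Jacod--Protter) we get $Z=\E[Y\,|\,Z]$ a.s., i.e.\ auto-calibration in the sense of Definition \ref{def auto calibration}. A more direct route is to choose $h(z) = z - m(z)$, where $m$ is a measurable version of $z\mapsto \E[Y\,|\,Z=z]$; this is in $L^2$ by Jensen. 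Then \eqref{eq prop auto-cal} combined with the orthogonality $\E[(Y-m(Z))h(Z)]=0$ yields $\E[(Z-m(Z))^2]=0$.

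The argument parallels Proposition \ref{prop equiv cal}, with $\sigma(\bX)$ replaced by the coarser $\sigma(Z)$. The only delicate step is the integrability bookkeeping: ensuring $Z\in L^2$, and ensuring that the class $\{h(Z)\in L^2\}$ is rich enough to contain indicators and dense in $L^2(\sigma(Z))$. This is where I would state the assumption $\widehat{\mu}(\bX)\in L^2(\p)$ explicitly if the paper leaves it implicit.
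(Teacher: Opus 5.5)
Your proof is correct and is essentially the argument the paper has in mind. The paper only sketches it by appeal to the $L^2$-projection (orthogonality) property of $\E[Y \, | \, \widehat{\mu}(\bX)]$, which is exactly your tower-property step for ``only if'' and your choice $h(z)=z-m(z)$ for ``if''; your indicator argument is the equivalent measure-theoretic route via \eqref{cond exp auto calibration}.

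One small remark: the extra assumption $\widehat{\mu}(\bX)\in L^2(\p)$ can be dropped. Under auto-calibration it follows from conditional Jensen. Conversely, the tests $h\equiv 1$ and $h=\mathds{1}_B$ already force $\widehat{\mu}(\bX)\in L^1(\p)$ and then $\widehat{\mu}(\bX)=\E[Y \, | \, \widehat{\mu}(\bX)]\in L^2(\p)$.
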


We conclude this section by considering a toy example illustrating the difference between calibration and auto-calibration. This example shows that the set of auto-calibrated regression functions is substantially larger than the set of calibrated ones. Moreover, it highlights that the granularity of auto-calibrated regression functions, given by $\sigma(\widehat{\mu}(\bX))$, can vary considerably.

\begin{example}
    \textnormal{Consider a feature space of only three elements $\X = \{A, B,C\}$ and let the true conditional mean of $Y\,|\bX$ be given by
    \begin{equation*}
        %\label{true example}
        \E[Y\, |\, \bX = \bx] =\mu^{*}(\bx) = \begin{cases}
        2, \quad \textnormal{for }\bx = A, \\
        6, \quad \textnormal{for }\bx = B, \\
        10, \quad \textnormal{for }\bx = C. \\
        \end{cases}
    \end{equation*}
    Moreover, assume that $\bX$ satisfies
    $$
    \p(\bX = A) = 1/2, \quad \p(\bX = B) = 1/4 \quad \textrm{and} \quad \p(\bX = C) = 1/4.
    $$
        The goal of this example is to evaluate the calibration and auto-calibration of the regression functions
    $$
        \widehat{\mu}_1(\bx) = \begin{cases}
        2, \quad \textnormal{for }\bx = A, \\
        8, \quad \textnormal{for }\bx = B, \\
        8, \quad \textnormal{for }\bx = C, \\
        \end{cases}
        \quad
        \textrm{and}
        \quad 
        \widehat{\mu}_2(\bx) = \begin{cases}
        5, \quad \textnormal{for }\bx = A, \\
        5, \quad \textnormal{for }\bx = B, \\
        5, \quad \textnormal{for }\bx = C. \\
        \end{cases}
        %\quad
        %\textrm{and}
        %\quad
        %\widehat{\mu}_3(\bx) = \begin{cases}
        %2, \quad \textnormal{for }\bx = A, \\
        %3, \quad \textnormal{for }\bx = B, \\
        %5, \quad \textnormal{for }\bx = C. \\
        %\end{cases}
    $$
    To this end, we first use \eqref{cond exp calibration} to assess calibration. That is, we check whether mean estimates are unbiased for all sets $S \in \sigma(\bX)$, which is given in this example by}
    \begin{multline*}
        \sigma(\bX) = \Big\{ \emptyset, \{\bX = A\}, \{\bX = B\}, \{\bX = C\}, \\ \{\bX \in A \cup B\}, \{\bX \in A \cup C\}, \{\bX \in B \cup C\}, \{\bX \in A \cup B \cup C\}  \Big\}.
    \end{multline*}
    \textnormal{For both regression functions, this property fails to hold for $S = \{\bX = B\}$ as we have}
    $$
    \E[Y \mathds{1}_S] = \E[\mu^*(\bX) \mathds{1}_S] = 1.5 \neq 2 = \E[\widehat{\mu}_1(\bX) \mathds{1}_S],
    $$
    \textnormal{and}
    $$
    \E[Y \mathds{1}_S] = \E[\mu^*(\bX) \mathds{1}_S] = 1.5 \neq 1.25 = \E[\widehat{\mu}_2(\bX) \mathds{1}_S].
    $$
    \textnormal{Thus, neither of the above regression functions are calibrated and, in fact, as the random variable $\bX$ is discrete and its support is given by $\X$, the only calibrated regression function in this example is given by $\mu^*: \X \to \R$. Next, we follow the same procedure to test for auto-calibration by using \eqref{cond exp auto calibration}. For this, note that}
    \begin{equation*}
        %\label{sigma 1}
        \sigma(\widehat{\mu}_1(\bX)) = \Big\{\emptyset, \{\bX = A\}, \{\bX \in B \cup C\}, \{\bX \in A \cup B \cup C\}\Big\},
    \end{equation*}
    \textnormal{and}
    \begin{equation*}
        %\label{sigma 2}
        \sigma(\widehat{\mu}_2(\bX)) = \Big\{\emptyset, \{\bX \in A \cup B \cup C\}\Big\}.
    \end{equation*}
    \textnormal{As these $\sigma$-algebras are strictly smaller than $\sigma(\bX)$, assessing the auto-calibration of both regression functions requires checking unbiasedness of mean estimates on fewer subparts of the covariate space and, in fact, it turns out that \eqref{cond exp auto calibration} holds for all sets in $\sigma(\widehat{\mu}_1(\bX))$ and $\sigma(\widehat{\mu}_2(\bX))$, respectively. This allows us to conclude that $\widehat{\mu}_1 : \X \to \R$ and $\widehat{\mu}_2 : \X \to \R$ are both auto-calibrated, but not calibrated.
}
\EndExample
\end{example}

\section{Assessing calibration and auto-calibration using boosting trees}
\label{test procedure}
\subsection{Necessary conditions for calibration and auto-calibration}
\label{test procedure 1}
Assessing calibration is a challenging problem in practice as the true conditional mean is unknown. At first glance, the problem seems easier when considering auto-calibration because this property only involves the response $Y$ and the fitted regression function $\widehat{\mu} : \X \to \R$. For instance, the expected values in \eqref{cond exp auto calibration} could in principle be replaced by empirical means for a large amount of observations. However, a problem still remains as the equality in \eqref{cond exp auto calibration} has to be checked for all sets $S \in \sigma(\widehat{\mu}(\bX))$ whose amount is typically uncountable and for which the number of available observations could be small. In practice, graphical methods are sometimes used to assess auto-calibration by plotting the empirical mean of binned responses against their corresponding mean estimates; see for example the {\it actual vs.~predicted} plot in Section 4.1.3 in Wüthrich et al.~\cite{Wüthrich2}. The problem of such approaches is that auto-calibration is only assessed for a few sets $S \in \sigma(\widehat{\mu}(\bX))$ and that the conclusion of this assessment largely depends on the chosen bins; we refer, for example, to Henzi et al.~\cite{Henzi}. This shows the need to have statistical tests to assess both calibration and auto-calibration. The goal of this section is to introduce new testing procedures to this end.%The goal of the next section is to introduce a new testing procedure based on boosting trees that allows us to evaluate the calibration and auto-calibration of fitted regression functions.

We start our discussion from Propositions \ref{prop equiv cal} and \ref{prop equiv auto-cal}, which provide equivalent conditions for calibration and auto-calibration, respectively. These conditions are difficult to evaluate in practice as the set of functions $g$ in \eqref{eq prop cal} and $h$ in \eqref{eq prop auto-cal} are typically uncountable. Fissler et al.~\cite{Fissler} suggest to use a finite number $K$ of {\it test functions} $g_1, \dots,g_K$, respectively $h_1, \dots,h_K$, in order to simultaneously check the $K$ underlying necessary conditions using joint Wald tests. For example, in the case of calibration, the null-hypothesis of their test reads as
$$
\mathbb{H}_0 : \E\left[\left(Y- \widehat{\mu}(\bX)  \right) g_k(\bX) \right] = 0, \quad \textrm{for } 1 \leq k \leq K.  
$$
As each of the $K$ above equalities represents a necessary condition, their test allows them to reject calibration whenever the null-hypothesis $\mathbb{H}_0$ is rejected. The same procedure applies to auto-calibration as well. In practice, unfortunately, this approach has some limitations as pointed out by Fissler et al.~\cite{Fissler}. On the one hand, one needs a high number $K$ of test functions to be able to detect various kinds of violations of calibration and auto-calibration, respectively. On the other hand, the power of their test decreases with increasing $K$. Moreover, the choice of the test functions is also unclear; some examples are provided in Fissler et al.~\cite{Fissler}. 

In this section, we aim at introducing new procedures to test for calibration and auto-calibration by only considering a single necessary condition, respectively, while being able to detect various kinds of violations of the above properties. To this end, we assume that an i.i.d.~sample of data $\D = (Y_i, \bX_i)_{i\in \DD}$ following the law of a pair $(Y, \bX)$ is at our disposal, and that the regression function of interest is fitted on the realizations of some part of the data $\mathcal L = (Y_i, \bX_i)_{i\in \mathfrak{L}} \subset \D$. We refer to these realizations $\ell = (y_i, \bx_i)_{i\in \mathfrak{L}}$ as the {\it learning set} and denote the fitted regression function by $\widehat{\mu}_\ell : \X \to \R$, emphasizing its dependency on $\ell$. Under this framework, the calibration property reads as 
$$
\widehat{\mu}_\ell(\bX) =  \E[Y \, |\, \bX], \quad \p\textrm{-a.s.},
$$
and the auto-calibration property is given by
$$
\widehat{\mu}_\ell(\bX) = \E[Y \, |\, \widehat{\mu}_\ell(\bX)], \quad \p\textrm{-a.s.}
$$
For both of the above properties, note that the randomness only lies in the pair $(Y, \bX)$ because we only assess the calibration and auto-calibration of the fitted regression function $\widehat{\mu}_\ell : \X \to \R$ for the single set of realizations $\ell = (y_i, \bx_i)_{i\in \LL}$.

A standard approach for these assessments is to use the remaining data $\D \setminus \mathcal L$ for evaluating a test statistics; %that leads to a possible rejection of the null-hypothesis of calibration or auto-calibration;
we refer for example to Denuit et al.~\cite{Denuit2} and Wüthrich \cite{Wüthrich}. In our case, however, as we want to test for the necessary condition in \eqref{eq prop cal} and \eqref{eq prop auto-cal} by solely using a single test function, respectively, we take a slightly different approach here. Indeed, we further divide the dataset $\D \setminus \mathcal{L}$ into a \textit{boosting set} $\B = (Y_i, \bX_i)_{i\in \mathfrak{B}}$ and a {\it test set} $\T = (Y_i, \bX_i)_{i\in \TT}$. The former set $\B$, along with the learning set $\l$, will be used to construct square integrable test functions $g_{\l, \B}$ and $h_{\l, \B}$ with the help of boosting trees. 
%The goal of these test functions will be to detect potential violations of the calibration and auto-calibration properties. 
The test set $\T$, for its part, will be used to evaluate a test statistics allowing us to possibly reject the null-hypothesis of calibration and auto-calibration, respectively. 
%the null-hypothesis
%\begin{equation}
    %\label{necessary cond Selim}
    %\mathbb{H}_0: \E[(Y-\widehat{\mu}_\l(\bX))  g_{\l, \B}(\bX)] = 0,
%\end{equation}
%in the case of calibration and
%\begin{equation}
    %\label{necessary cond Selim 2}
    %\mathbb{H}_0: \E(Y-\widehat{\mu}_\l(\bX)) h_{\l, \B}(\widehat{\mu}_\l(\bX))] = 0,
%\end{equation}
%in the case of auto-calibration. 
The use of each set is summarized in Figure \ref{Rectangles sets}. Moreover, the dependency of each considered function on the sets $\l, \B$ or $\T$ will be denoted by subscripts, and we use the notation $\X_\l, \X_\B$ and $\X_\T$ below for the sets of features appearing in the learning, boosting and test sets, respectively.

\begin{figure}[htb!]
\begin{center}
\begin{tikzpicture}[scale = 0.5]

% dimensions
\def\W{3}  % width of rectangles
\def\H{6}  % height

% --- Rectangle 0 ---
\draw[dashed] (0,0) rectangle (\W,\H);

\node at (0.5*\W,0.5*\H) {\Large $\mathcal{D}$};

% --- Rectangle 1 ---
\begin{scope}[xshift=7cm]
\fill[blue!20] (0,0.4*\H) rectangle (\W,\H);
\draw  (0,0.4*\H) rectangle (\W,\H);
\draw[dashed] (0,0) rectangle (\W,\H);

\node at (0.5*\W,0.7*\H) {\Large $\mathcal{L}$};

\node at (0.5*\W,-1) {(b)};

\end{scope}

% label
\node at (0.5*\W,-1) {(a)};

% --- Rectangle 2 ---
\begin{scope}[xshift=14cm]

% 60% area
\fill[blue!20] (0,0.4*\H) rectangle (\W,\H);

% 20% area
\fill[red!20] (0,0.4*\H) rectangle (\W,0.2*\H);

\node at (0.5*\W,0.7*\H) {\Large $\mathcal{L}$};
\node at (0.5*\W,0.3*\H) {\Large $\mathcal{B}$};

%\draw (0,0) rectangle (\W,\H);
\draw  (0,0.4*\H) rectangle (\W,\H);
\draw  (0,0.4*\H) rectangle (\W,0.2*\H);
\draw[dashed] (0,0) rectangle (\W,0.2*\H);

% label
\node at (0.5*\W,-1) {(c)};

\end{scope}

% --- Rectangle 3 ---
\begin{scope}[xshift=21cm]

% remaining 20%
\fill[green!20] (0,0.2*\H) rectangle (\W,0);

%\draw (0,0) rectangle (\W,\H);
\draw[dashed]  (0,0.2*\H) rectangle (\W,\H);
%\draw  (0,0.4*\H) rectangle (\W,0.2*\H);
\draw  (0,0.2*\H) rectangle (\W,0);
\node at (0.5*\W,0.1*\H) {\Large $\mathcal{T}$};

% label
\node at (0.5*\W,-1) {(d)};
\end{scope}

\end{tikzpicture}
\end{center}
\caption{(a) The large rectangle represents the full dataset $\D$. (b) The realizations of $\mathcal{L}$, denoted by the learning set $\l$, are used to fit the regression function $\widehat{\mu}_\l : \X \to \R$ of interest. (c) The same learning set $\l$ as well as the boosting set $\B$ are then used to construct test functions $g_{\l, \B}$ and $h_{\l, \B}$ using boosting trees. (d) The test set $\T$ is finally used to compute a test statistics leading to a possible rejection of the null-hypothesis of calibration and auto-calibration, respectively.}
\label{Rectangles sets}

\end{figure}
\subsection{Construction of the single test function using boosting trees}

\label{sec selecting test function}
Using a single test function in \eqref{eq prop cal} and \eqref{eq prop auto-cal} leads to the assessment of a necessary condition for calibration and auto-calibration, respectively. In general, this results in low statistical power as a regression function $\widehat{\mu}_\l : \X \to \R$ might, for example, satisfy \eqref{eq prop auto-cal} for some test function $h$ while not being auto-calibrated. The choice of the test function thus plays a crucial role for the power of our tests and we consider the use of boosting trees in this regard.

Boosting is a class of ensemble methods that aims to improve an already fitted model by iteratively combining  weak learners with the aim of reducing the size of the residuals at each step. In order to assess calibration and auto-calibration, we propose to use regression trees as weak learners for fitting a competitive model to the regression model $\widehat{\mu}_\l : \X \to \R$ on both the learning and boosting sets. For this, we start from an initial regression function $\widehat{\mu}^\textrm{boost}_{(0)} : \X \to \R,$
e.g., the homogeneous mean on $\l$ and $\B$, and define successively
\begin{equation*}
    %\label{sum trees}
    \widehat{\mu}^\textrm{boost}_{(m)} : \X \to \R, \quad \bx \mapsto \widehat{\mu}^\textrm{boost}_{(m)} (\bx) = \widehat{\mu}^\textrm{boost}_{(m-1)}(\bx) + \beta_m t_m(\bx),
\end{equation*}
for $m \in \{1, \dots, M\}$. Above, the parameters $\beta_1, \dots, \beta_m \in \R$ correspond to pre-defined {\it learning rates} and the maps $t_m : \X \to \R$ belong to the class of regression trees with some pre-defined hyper-parameters that we denote by $\mathcal{M}_{tree}$. At each iteration $m$, these trees are selected by solving
\begin{equation*}
    %\label{minimzer loss tree}
  t_m \in \argmin\limits_{t \in \mathcal{M}_{tree}} \left(\sum_{i \in \LL, \BB} L(Y_i, \widehat{\mu}^\textrm{boost}_{(m-1)}(\bx) + \beta_m t_m(\bx_i))\right),  
\end{equation*}
for a pre-selected strictly consistent loss function $L$. It is well known that for large values of iteration steps $M$, the resulting regression function $\widehat{\mu}^\textrm{boost}_{(M)} : \X \to \R$ tends to overfit, i.e., it starts modelling the noisy part of the learning set $\l$ and the boosting set $\B$ instead of its systematic part. A possible solution to mitigate this issue is to carefully select the learning rates $\left(\beta_m\right)_m$ and to apply early stopping; we refer to Friedman \cite{Friedman} and to Chapter 10 of Hastie et al.~\cite{Hastie}. We do not further elaborate on these aspects here. A practical implementation of the boosting algorithm that addresses overfitting will be provided in Section \ref{sec num example}, below.

Throughout this paper, we call $\widehat{\mu}^\textrm{boost}_{(M)} : \X \to \R$ the {\it benchmark gradient boosting model (GBM)} and denote it by $\widehat{\mu}^\textrm{boost}_{\l,\B} : \X \to \R$, because it is fitted on both the learning and boosting sets. Its goal is to primarily capture the systematic part of the sets $\ell$ and $\B$, i.e., we assume that appropriate measures have been taken during the fitting stage to mitigate overfitting. Under this assumption, boosting trees are known to achieve strong out-of-sample predictive performance; see, e.g., Friedman \cite{Friedman}. As the fitting procedure of this new model makes use of an additional set, the boosting set $\mathcal{B}$, we expect the benchmark GBM $\widehat{\mu}^\textrm{boost}_{\l,\B} : \X \to \R$ to achieve a higher predictive power than the originally fitted regression function $\widehat{\mu}_{\ell} : \mathcal{X} \to \mathbb{R}$. 

Our proposal is to detect violations of calibration and auto-calibration for the originally fitted regression function by comparing its mean estimates with those produced by the benchmark GBM. For instance, on the one hand, the inequality  
\begin{equation}
    \label{case 1}
    \widehat{\mu}^\textrm{boost}_{\l, \B}(\bx) > \widehat{\mu}_\l(\bx)
\end{equation}
indicates that the initial regression function $\widehat{\mu}_\l : \X \to \R$ might exhibit a systematic negative bias for the sets of features $\bx \in \X$ satisfying \eqref{case 1}, whereas, on the other hand, the inequality
\begin{equation}
    \label{case 2}
    \widehat{\mu}^\textrm{boost}_{\l, \B}(\bx) < \widehat{\mu}_\l(\bx)
\end{equation}
hints to a possible systematic positive bias. Using the originally fitted regression function $\widehat{\mu}_\l~:~\X\to\R$ as well as the benchmark GBM $\widehat{\mu}^\textrm{boost}_{\l, \B}~:~\X\to\R$, test functions $g_{\l, \B}$ and $h_{\l, \B}$ can then be constructed with the aim of pushing the expected values in \eqref{eq prop cal} and \eqref{eq prop auto-cal} away from $0$ whenever the null-hypotheses of calibration and auto-calibration are violated. This can be done, for example, by multiplying the residuals $Y - \widehat{\mu}_\l(\bX)$ by a positive value when \eqref{case 1} holds, and by a negative value when \eqref{case 2} holds. We make this choice below, where we provide general procedures to assess calibration and auto-calibration.

\subsection{Assessing calibration}
\label{sec assessing calibration}

In this section, our aim is to assess calibration by testing \eqref{eq prop cal} for a single test function $g_{\l, \B}~:~\X \to \R$ that is constructed using the learning set $\l$ and the boosting set $\B$. For this, we rely on the following proposition, whose proof is provided in the appendix.

\begin{prop}
    \label{prop theory cal}
    Let $\T =(Y_i, \bX_i)_{i \in \TT}$ be a sequence of i.i.d.~random vectors following the law of some independent pair $(Y, \bX)$. Moreover, let $\widehat{\mu}_\ell : \X \to \R$ be a measurable function and $g_{\ell, \B}: \X \to \R$ be a $\sigma(\B)$-measurable function for some set of i.i.d~random vectors $\B =(Y_i, \bX_i)_{i \in \BB}$ that is independent of $\T$. Then, the random variables
    \begin{equation}
        \label{prop theory cal Z_i}
        Z_i = (Y_i - \widehat{\mu}_\ell(\bX_i)) g_{\ell, \B}(\bX_i), \quad i \in \TT,
    \end{equation}
    are conditionally i.i.d.~given $\B$. If these random variables further satisfy $$0 < \textnormal{Var}(Z_i \, | \, \B) < \infty, \quad \p\textrm{-a.s.,}$$ we have, under the null-hypothesis of calibration of $\widehat{\mu}_\ell : \X \to \R$, that
    \begin{equation}
        \label{convergence cal}
        T_n^{\textrm{cal}} = \frac{\bar{Z}}{\sqrt{S_Z^2/n}} \stackrel{d}{\longrightarrow} \mathcal{N}(0,1), 
    \end{equation}
    as $n = |\T| \to \infty$, and where
    $$
    \bar{Z} = \frac{1}{n} \sum_{i=1}^n Z_i \quad \textrm{and} \quad S_Z^2 = \frac{1}{n-1} \sum_{i=1}^n (Z_i - \bar{Z})^2.
    $$
\end{prop}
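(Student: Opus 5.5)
The plan is to argue conditionally on $\B$ and then integrate out. Write $Z_i = \phi(Y_i,\bX_i,\B)$ with $\phi(y,\bx,b) = (y-\widehat{\mu}_\ell(\bx))\,g_{\ell,b}(\bx)$. Here $\widehat{\mu}_\ell$ is a deterministic measurable function, since $\ell$ is a fixed realization, and $g_{\ell,\B}$ depends on $\B$ measurably. The family $(Y_i,\bX_i)_{i\in\TT}$ is i.i.d.\ and independent of $\B$. For bounded measurable $f_i$, the freezing lemma (substitution under independence) gives
\[
\E\Big[\prod_{i} f_i(Z_i)\,\Big|\,\B\Big] = \Psi(\B), \qquad \Psi(b) = \E\Big[\prod_i f_i(\phi(Y_i,\bX_i,b))\Big] = \prod_i \E\big[f_i(\phi(Y,\bX,b))\big].
\]
The last equality uses that the $(Y_i,\bX_i)$ are i.i.d. The right-hand side factorizes into identical factors, which is precisely conditional i.i.d.-ness of the $Z_i$ given $\B$.

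Next I identify the conditional mean under the null. Fix $b$ with $\textnormal{Var}(\phi(Y,\bX,b))<\infty$. Then $g_{\ell,b}(\bX)\in L^2$: I would either argue this from square integrability of $Y$, or, more safely, assume it as implicit in the finite conditional variance hypothesis. Under calibration, Proposition \ref{prop equiv cal} with the deterministic test function $g=g_{\ell,b}$ gives $\E[\phi(Y,\bX,b)]=0$. Plugging back via the freezing lemma, $\E[Z_i\mid\B]=0$ a.s. Let $\sigma^2(\B)=\textnormal{Var}(Z_i\mid\B)\in(0,\infty)$ a.s.

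The main step is a conditional CLT plus Slutsky. For $\p_\B$-a.e.\ fixed $b$, the variables $\phi(Y_i,\bX_i,b)$ are i.i.d.\ with mean $0$ and finite positive variance $\sigma^2(b)$. The classical Lindeberg--L\'evy CLT then gives $\sqrt n\,\bar Z/\sigma(b)\Rightarrow\mathcal N(0,1)$. The weak law of large numbers gives $S_Z^2\to\sigma^2(b)$ in probability, since $S_Z^2$ is a continuous function of the first two sample moments. Slutsky then yields $T_n^{\textrm{cal}}\Rightarrow\mathcal N(0,1)$ under the law with $\B=b$ frozen. Hence for each bounded continuous $f$, $\E[f(T_n^{\textrm{cal}})\mid\B]\to\E f(N)$ a.s. Dominated convergence (since $|f|$ is bounded) gives $\E f(T_n^{\textrm{cal}})\to \E f(N)$, which is the unconditional convergence \eqref{convergence cal}.

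The main obstacle is measure-theoretic bookkeeping rather than analysis. I must justify that conditioning on $\B$ really reduces to freezing $b$. This requires a regular version of the conditional law, which the freezing lemma on Polish spaces provides. I must also check that the null hypothesis yields a zero conditional mean for almost every $b$, not merely on average. Both points follow because calibration, and hence Proposition \ref{prop equiv cal}, holds for every deterministic square integrable $g$, so they apply to each frozen $g_{\ell,b}$. The limit being the same $\mathcal N(0,1)$ for every $b$ is what allows dominated convergence to remove the conditioning.
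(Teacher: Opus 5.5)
Your proposal is correct and follows the same route as the paper's proof: argue conditionally on $\B$, apply the CLT, the law of large numbers and Slutsky's theorem for almost every frozen realization $\B=\bb$, and then remove the conditioning because the limit $\mathcal{N}(0,1)$ does not depend on $\bb$. Your freezing-lemma and dominated-convergence steps make rigorous what the paper states in one line, and one small point needs changing: you do not need $g_{\ell,\bb}(\bX)\in L^2$, which neither $Y\in L^2$ nor the finite-variance hypothesis guarantees, because under calibration $\E[Z_i\,|\,\B]=0$ already follows from $\E[Y-\widehat{\mu}_\ell(\bX)\,|\,\bX]=0$ by taking out what is known, and that only requires $Z_i$ to be integrable given $\B$.
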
 

\begin{rem}
    \label{rem prop calibration}
    \textnormal{The randomness in \eqref{convergence cal} lies simultaneously in the boosting set $\B$ and the test set $\T$. Moreover, the above result holds for any fixed size of the boosting set, as long as the size of the test set $|\T|$ goes to infinity. However, although the size $|\B|$ does not affect the convergence of the test statistics $T_n^{\textrm{cal}}$ under the null-hypothesis of calibration of $\widehat{\mu}_\ell : \X \to \R$, we emphasize that it plays a role when this null-hypothesis is violated. Indeed, the larger the boosting set $\B$ is, the more the benchmark GBM should be able to detect violations of calibration and lead to the rejection of the null-hypothesis in the latter case.}
\end{rem}

Following the discussion in Section \ref{sec selecting test function}, Proposition \ref{prop theory cal} naturally motivates the general procedure for constructing calibration tests, below. The idea of this procedure is to introduce a test statistics based on weighted residuals, where the weights are given by a test function $g_{\l,\B}~:~\X~\to~\R$, see \eqref{prop theory cal Z_i} and \eqref{convergence cal}. This test function will be constructed using the benchmark GBM $\widehat{\mu}^\textrm{boost}_{\l, \B} : \X \to \R$ in such a way that it takes positive values whenever positive biases are detected in the originally fitted regression function $\widehat{\mu}_{\l} : \X \to \R$, and negative values whenever negative biases are detected, see Section \ref{sec selecting test function}. The following procedure provides a general framework; an explicit implementation of the resulting test is presented in Section \ref{sec num example test cal}.

\noindent\hrulefill\\%[-0.35em]
\centerline{\textsc{Procedure to construct a calibration test using boosting trees}}
\vspace{-0.35em}
\noindent\hrulefill

\bigskip

\begin{enumerate}
  %\item Fit a competitive model to $\widehat{\mu}_\l:\X \to \R$ using boosting trees on the learning set $\l$ and the boosting set $\B$. This results in a new regression function $\widehat{\mu}^\textrm{boost}_{\l, \B} : \X \to \R$.
  \item Fit a benchmark GBM $\widehat{\mu}^\textrm{boost}_{\l, \B} : \X \to \R$ on the learning set $\l$ and the boosting set $\B$.
  \item Select a test function $g_{\l, \B} : \X \to \R$ such that for $\bx \in \X$,
    \begin{itemize}
        \item $g_{\l, \B}(\bx) > 0$, whenever $\widehat{\mu}^\textrm{boost}_{\l, \B}(\bx) > \widehat{\mu}_\l(\bx)$,%, i.e., when we believe that $\widehat{\mu}_\l : \X \to \R$ is under-fitting.
        \item $g_{\l, \B}(\bx) = c \in \R\setminus\{0\}$, whenever $\widehat{\mu}^\textrm{boost}_{\l, \B}(\bx) = \widehat{\mu}_\l(\bx)$,%, i.e., when we believe that $\mu : \X \to \R$ is neither over-fitting, nor under-fitting.
        \item $g_{\l, \B}(\bx) < 0$, whenever $\widehat{\mu}^\textrm{boost}_{\l, \B}(\bx) < \widehat{\mu}_\l(\bx)$.%, i.e., when we believe that $\mu : \X \to \R$ is over-fitting.
    \end{itemize}
    \item Compute the test statistics $T_n^{\textrm{cal}}$ in \eqref{convergence cal} on the test set $\T$, i.e., for the random variables
    $$
    Z_i = (Y_i - \widehat{\mu}_{\l}(\bX_i)) g_{\l, \B}(\bX_i), \quad 1 \leq i \leq n,
    $$
    where $n = |\T|$.
    \item Reject the calibration of the regression function $\widehat{\mu}_\l : \X \to \R$ at the pre-specified confidence level $1-\alpha \in (0,1)$ whenever
    $$
        |T_n^{\textrm{cal}}| > \Phi^{-1}(1-\alpha/2),
    $$
    where $\Phi(\cdot)$ denotes the standard Gaussian distribution.
\end{enumerate}
\noindent\hrulefill

\begin{rem}
    \label{rem theory cal}
    \textnormal{The conditions on the test function $g_{\l, \B} : \X \to \R$ in the second step aim at making the test statistics $T_n$ positive whenever the calibration assumption is violated, see Section \ref{sec selecting test function}. Such a statement assumes that the boosted regression function $\widehat{\mu}^\textrm{boost}_{\l, \B}: \X \to \R$ is able to successfully detect those violations, and we emphasize that even if this is not the case, the proposed testing procedure remains valid. It might, however, exhibit a low power. Moreover, note that, above, we impose that the test function differs from $0$ whenever $\widehat{\mu}^\textrm{boost}_{\l, \B}(\bx) = \widehat{\mu}_\l(\bx)$. The reason for this requirement is to avoid the degenerate case $T_n^{\textrm{cal}} = 0, \, \p\textrm{-a.s.}$, which occurs when $\widehat{\mu}_\ell(\bX) = \widehat{\mu}^{\textrm{boost}}_{\ell,\mathcal{B}} (\bX), \, \p\textrm{-a.s.}$ For most pairs $(Y,\bX)$, the latter event has probability zero and in the other cases where this could happen, the corresponding test function would satisfy $g_{\ell,\mathcal{B}}(\bx) \equiv c$ for a.e.~$\bx\in\mathcal{X}$, so that we only assess global unbiasedness. Interestingly, in this situation, the value of the test statistic $T_n^{\textrm{cal}}$ would not depend on the choice of the constant $c$.} %Finally, we point out again that as the null-hypothesis \eqref{necessary calibration} is a necessary condition for calibration, see Proposition \ref{prop equiv cal}, our test allows us to test for calibration.}
\end{rem} 

\subsection{Assessing auto-calibration}

\label{sec assessing auto calibration}

A similar result to Proposition \ref{prop theory cal} can be derived to assess auto-calibration. The only difference is that, this time, the test function is not defined on the feature space $\X$, but on the {\it mean estimate space}
$$
{\widehat{\mu}}_\l (\X) = \left\{  {\widehat{\mu}}_\l(\bx) : \bx \in \X  \right\}.
$$

\begin{prop}
    \label{prop theory auto cal}
    Let $\T =(Y_i, \bX_i)_{i \in \TT}$ be a sequence of i.i.d.~random vectors following the law of some independent pair $(Y, \bX)$. Moreover, let $\widehat{\mu}_\ell : \X \to \R$ be a measurable function and $h_{\ell, \B}: \widehat{\mu}_\ell(\X) \to \R$ be a $\sigma(\B)$-measurable function for some set of i.i.d~random vectors $\B =(Y_i, \bX_i)_{i \in \BB}$ that is independent of $\T$. Then, the random variables
    \begin{equation}
        \label{prop theory auto-cal Z_i}
        Z_i = (Y_i - \widehat{\mu}_\ell(\bX_i)) h_{\ell, \B}(\widehat{\mu}_\ell(\bX_i)), \quad i \in \TT,
    \end{equation}
    are conditionally i.i.d.~given $\B$. Morever, if these random variables satisfy $$0 < \textnormal{Var}(Z_i \, | \, \B) < \infty, \, \quad \p\textrm{-a.s.},$$ we have, under the null-hypothesis of auto-calibration of $\widehat{\mu}_\ell : \X \to \R$,
    that
    \begin{equation}
        \label{convergence auto cal}
        T_n^{\textrm{ac}} = \frac{\bar{Z}}{\sqrt{S_Z^2/n}} \stackrel{d}{\longrightarrow} \mathcal{N}(0,1) 
    \end{equation}
    as $n = |\T| \to \infty$, and where
    $$
    \bar{Z} = \frac{1}{n} \sum_{i=1}^n Z_i \quad \textrm{and} \quad S_Z^2 = \frac{1}{n-1} \sum_{i=1}^n (Z_i - \bar{Z})^2.
    $$
\end{prop}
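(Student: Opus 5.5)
The proof follows the argument for Proposition \ref{prop theory cal}. We work conditionally on the boosting set $\B$ and then integrate out. Write $h := h_{\l,\B}$. Since $h$ is $\sigma(\B)$-measurable and $\T$ is independent of $\B$, we can freeze a realization $b$ of $\B$. Then $h_b := h_{\l,b}$ is a deterministic measurable function on $\widehat{\mu}_\l(\X)$, and $\widehat{\mu}_\l$ is deterministic as well, because $\l$ is a fixed set of realizations. Each $Z_i$ is then a fixed measurable map $\phi_b(Y_i,\bX_i)$ of the i.i.d.\ vectors $(Y_i,\bX_i)$, $i\in\TT$. Because $\T$ is independent of $\B$, the conditional law of $\T$ given $\B=b$ is still the product law. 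Hence the $Z_i$ are conditionally i.i.d.\ given $\B$. Formally, the joint conditional distribution of $(Z_i)_{i\in\TT}$ given $\B$ is $\mu_b^{\otimes n}$, where $\mu_b$ is the law of $\phi_b(Y,\bX)$.

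Next we compute the conditional mean under the null. For fixed $b$, we need $\E[(Y-\widehat{\mu}_\l(\bX))\,h_b(\widehat{\mu}_\l(\bX))]=0$. This is exactly \eqref{eq prop auto-cal} from Proposition \ref{prop equiv auto-cal}, applied with test function $h_b$, provided $h_b(\widehat{\mu}_\l(\bX))\in L^2(\p)$. The assumption $\mathrm{Var}(Z_i\,|\,\B)<\infty$ only gives square integrability of the product $Z_i$, not of $h_b(\widehat{\mu}_\l(\bX))$ alone. I therefore avoid the $L^2$ condition and argue directly with the tower property.

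On the event $\{h_b(\widehat{\mu}_\l(\bX))\in[-k,k]\}$, truncate $h_b$ to a bounded function. A bounded $h$ qualifies for \eqref{eq prop auto-cal}, and equivalently $\E[(Y-\widehat{\mu}_\l(\bX))\,\mathds{1}_S\,h_b]=0$ by \eqref{cond exp auto calibration} and the conditional expectation given $\sigma(\widehat{\mu}_\l(\bX))$. Since $Z$ is integrable, dominated convergence as $k\to\infty$ yields $\E[Z_i\,|\,\B=b]=0$. The integrability of $Z$ follows from its finite conditional variance. This truncation step is the only place where some care is needed, and I expect it to be the main (mild) obstacle.

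Conditionally on $\B=b$, the $Z_i$ are i.i.d.\ with mean $0$ and variance $\sigma_b^2\in(0,\infty)$. The classical CLT gives $\sqrt n\,\bar Z/\sigma_b \stackrel{d}{\longrightarrow} \mathcal N(0,1)$. The weak law of large numbers, applied to $Z_i$ and $Z_i^2$, gives $S_Z^2\to\sigma_b^2$ in probability. Slutsky's lemma then yields $T_n^{\textrm{ac}}\stackrel{d}{\longrightarrow}\mathcal N(0,1)$ conditionally on $\B=b$, for $\p$-a.e.\ $b$.

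Finally, we pass to the unconditional statement. For every bounded continuous $f$, we have $\E[f(T_n^{\textrm{ac}})\,|\,\B]\to\E[f(N)]$ almost surely. Taking expectations and applying dominated convergence gives $\E[f(T_n^{\textrm{ac}})]\to\E[f(N)]$, which is the claimed convergence in \eqref{convergence auto cal}. The limit does not depend on $b$, so no uniformity in $b$ is needed, and the size of $\B$ may stay fixed, as noted in Remark \ref{rem prop calibration}.
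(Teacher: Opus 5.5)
Your proof is correct and follows essentially the same route as the paper, which proves Proposition \ref{prop theory cal} and states that this case is identical: fix a realization $\B = \bb$, apply the CLT and the law of large numbers for $S_Z^2$ with Slutsky's theorem conditionally, then drop the conditioning because the limit does not depend on $\bb$. Your truncation argument for $\E[Z_i \,|\, \B] = 0$ and your explicit dominated-convergence step for removing the conditioning are sound additions. They make rigorous two points the paper passes over, notably that it invokes the null without checking $h_{\l,\B}(\widehat{\mu}_\l(\bX)) \in L^2(\p)$.
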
 

\iffalse
\begin{prop}
    \label{prop theory auto cal}
    Let $(Y_i, \bX_i)_{i=1}^n$ be an i.i.d.~sequence of random vectors following the law of some independent pair $(Y, \bX)$. Moreover, let $\mu : \X \to \R$ and $h: \mu(\X) \to \R$ be two measurable functions such that the random variables
    \begin{equation}
        \label{prop theory auto-cal Z_i}
        Z_i = (Y_i - \mu(\bX_i)) h(\mu(\bX_i)), \quad 1 \leq i \leq n,
    \end{equation}
    satisfy $0 < \textrm{Var}(Z_i) < \infty$. Then, the random variables $(Z_i)_{i=1}^n$ are i.i.d.~and under the null-hypothesis
    $$
    \mathbb{H}_0 : \E[(Y-\mu(\bX)) h(\mu(\bX))] =0,
    $$
    we have
    \begin{equation}
        \label{convergence auto cal}
        T = \frac{\bar{Z}}{\sqrt{S_Z^2/n}} \stackrel{d}{\longrightarrow} \mathcal{N}(0,1) 
    \end{equation}
    as $n \to \infty$, where
    $$
    \bar{Z} = \frac{1}{n} \sum_{i=1}^n Z_i \quad \textrm{and} \quad S_Z^2 = \frac{1}{n-1} \sum_{i=1}^n (Z_i - \bar{Z})^2.
    $$
\end{prop}
\fi

The proof of this result is similar to the proof of Proposition \ref{prop theory cal}. Therefore, Remark \ref{rem prop calibration} similarly applies here. As above, we provide a general procedure for testing auto-calibration, below. The main difference with respect to the previous section is that the test function $h_{\l, \B} : \widehat{\mu}_\l(\X) \to \R$ does not directly depend on individual sets of features $\bx \in \X$, but it is a map defined on the mean estimate space.
The second step of the procedure in Section \ref{sec assessing calibration} is thus modified as follows. In order to define the test function, we first introduce a map $\tilde{g}_{\l, \B} : \X \to \R$ satisfying
\begin{equation}
        \label{requirement auto-calibration}
        \widehat{\mu}_\l(\bx) = \widehat{\mu}_\l(\bx') \implies \tilde{g}_{\l, \B}(\bx) = \tilde{g}_{\l, \B}(\bx'), \quad \textrm{for all  } \bx, \bx' \in \X.
    \end{equation}
This condition ensures that all sets of features having the same mean estimate are assigned the same value under the map $\tilde{g}_{\l, \B} : \X \to \R$, which allows us to uniquely define a test function $h_{\l, \B} : \widehat{\mu}_\l(\X) \to \R$ through $$\tilde{g}_{\l , \B} = h_{\l, \B} \circ \widehat{\mu}_\l.$$ %Finally, note that the third and fourth steps of the procedure in Section \ref{sec assessing calibration} are adapted accordingly. 
The following procedure provides a general framework for assessing auto-calibration; an explicit implementation of the resulting test is presented in Section \ref{sec num example test auto cal}.

\noindent\hrulefill\\%[-0.35em]
\centerline{\textsc{Procedure to construct an auto-calibration test using boosting trees}}
\vspace{-0.35em}
\noindent\hrulefill

\bigskip

\begin{enumerate}
  \item Fit a benchmark GBM $\widehat{\mu}^\textrm{boost}_{\l, \B} : \X \to \R$ on the learning set $\l$ and the boosting set $\B$.
  \item Select a function $\tilde{g}_{\l, \B} : \X \to \R$ satisfying \eqref{requirement auto-calibration} and such that, on average, for $\bx \in \X_{\l} \cup \X_\B$,
    \begin{itemize}
        \item $\tilde{g}_{\l, \B}(\bx) > 0$, whenever $\widehat{\mu}^\textrm{boost}_{\l, \B}(\bx) > \widehat{\mu}_\l(\bx)$,%, i.e., when we believe that $\mu : \X \to \R$ is under-fitting.
        \item $\tilde{g}_{\l, \B}(\bx) = c \in \R \setminus \{0\}$, whenever $\widehat{\mu}^\textrm{boost}_{\l, \B}(\bx) = \widehat{\mu}_\l(\bx)$,%, i.e., when we believe that $\mu : \X \to \R$ is neither over-fitting, nor under-fitting.
        \item $\tilde{g}_{\l, \B}(\bx) < 0$, whenever $\widehat{\mu}^\textrm{boost}_{\l, \B}(\bx) < \widehat{\mu}_\l(\bx)$.%, i.e., when we believe that $\mu : \X \to \R$ is over-fitting.
    \end{itemize}
    \item Compute the test statistics $T_n^{\textrm{ac}}$ in \eqref{convergence auto cal} on the test set $\T$, i.e., for 
    \begin{equation}
        \label{Z_i auto-cal}
        Z_i = (Y_i - \widehat{\mu}_{\l}(\bX_i)) h_{\l, \B}(\widehat{\mu}_{\l}(\bX_i)), \quad 1 \leq i \leq n,
    \end{equation}
    where $n = |\T|$, and $h_{\l, \B} : \widehat{\mu}_{\l}(\X) \to \R$ is the unique map satisfying $\tilde{g}_{\l, \B} = h_{\l, \B} \circ \widehat{\mu}_\l$.
    \item Reject the auto-calibration of the regression function $\widehat{\mu}_\l : \X \to \R$ at the pre-specified confidence level $1-\alpha \in (0,1)$ whenever
    $$
        \left|T_n^{\textrm{ac}}\right| > \Phi^{-1}(1-\alpha/2),
    $$
    where $\Phi(\cdot)$ denotes the standard Gaussian distribution.
\end{enumerate}
\noindent\hrulefill

\begin{rem}
    \textnormal{We emphasize again that the requirement in \eqref{requirement auto-calibration} is needed to ensure that the test function $h_{\l, \B} : \widehat{\mu}_\l (\X) \to \R$ in \eqref{Z_i auto-cal} exists and is unique. Moreover, as in the case of calibration, the conditions on $\tilde{g}_{\l, \B} : \X \to \R$ in the second step aim to make the test statistics $T_n^{\textrm{ac}}$ positive. The difference, here, is that one might find two sets of features $\bx, \bx' \in \X$ for which we have
    $$
    \widehat{\mu}^\textrm{boost}_{\l, \B}(\bx) < \widehat{\mu}_\l (\bx) = \widehat{\mu}_\l (\bx') < \widehat{\mu}^\textrm{boost}_{\l, \B}(\bx').
    $$
    Therefore, in view of \eqref{requirement auto-calibration}, one can only impose those conditions to hold, on average, for the available features $\bx~\in~\X_{\l} \cup \X_\B$ as the true distribution of $\bX$ is unknown; we come back to this in Section \ref{sec num example test auto cal}.} %Finally, note that as the null-hypothesis \eqref{necessary auto-calibration} is a necessary condition, see Proposition \ref{prop equiv auto-cal}, the proposed procedure allows us to test for auto-calibration.}
\end{rem}

We conclude this section by pointing out that the auto-calibration test of Denuit et al.~\cite{Denuit2} is also based on the measure theoretical definition of conditional expectation as it seeks to verify whether
$$
    \E[Y \, \mathds{1}_{\{\widehat{\mu}_\l(\bX) \leq t\}}] = \E[\widehat{\mu}(\bX) \, \mathds{1}_{\{\widehat{\mu}_l(\bX) \leq t\}}],
$$
holds for all $t \in \R$, which is an equivalent condition to \eqref{cond exp auto calibration}. %However, in contrast to our proposed testing procedure, their test allows us to test for an equivalent condition of auto-calibration.
To this end, these authors consider the curve
\begin{equation}
    \label{accumaltion Denuit}
    \alpha \in (0,1) \longmapsto A_n(\alpha) = \frac{1}{\sqrt{|\T|}}\sum_{i \in \TT} (Y_i-\widehat{\mu}_\ell(\bX_i)) \mathds{1}_{\{\widehat{\mu}_\ell(\bX_i) \leq \widehat{F}^{-1}_{\widehat{\mu}_\ell}(\alpha)\}},
\end{equation}
where $\widehat{F}^{-1}_{\widehat{\mu}_\ell}(\cdot)$ denotes the empirical quantile of the distribution of $\widehat{\mu}_\ell(\bX)$. This curve is then used to compute the test statistics
\begin{equation*}
    T_n = \sup_{\alpha \in (0,1)} |A_n(\alpha)|,
\end{equation*}
whose distribution is unknown. Therefore, they propose to compute the critical level of rejection for their test statistics using non-parametric Monte Carlo simulation approximations.

In principle, their test could be modified such that the accumulation of responses and mean estimates in \eqref{accumaltion Denuit} takes place in the opposite direction as  
$$
    \E[Y \, \mathds{1}_{\{\widehat{\mu}_\ell(\bX) \geq t\}}] = \E[\widehat{\mu}_\ell(\bX) \, \mathds{1}_{\{\widehat{\mu}_\ell(\bX) \geq t\}}],
$$
for all $t \in \R$, is equivalent to \eqref{cond exp auto calibration} too. In fact, any other arbitrary (and pre-defined) order could be chosen as well. Although their test assesses an equivalent condition for auto-calibration, whereas we only test for a necessary condition, we show in the next section that our auto-calibration test achieves a higher power for a large insurance dataset. This may seem surprising at first glance. The reason is that the test of Denuit et al.~\cite{Denuit2} is based on a Kolmogorov-Smirnov type statistics that considers the entire support of the mean estimates $\widehat{\mu}_\ell(\bX)$ simultaneously instead of focusing on parts of the support where violations are most likely to happen. Interestingly, our idea of first learning violations of auto-calibration before defining the test statistics could similarly be used in their setup, leading, for example, to compute the above test statistics for only a subpart of the mean estimation space.

\section{Numerical Example}
\label{sec num example}
\subsection{Dataset}
We apply in this section the two testing procedures presented in Section \ref{test procedure} to a Swiss motor third liability insurance dataset introduced by Wüthrich--Buser \cite{Wüthrich--Buser}. This dataset has been synthetically constructed based on the French motor third party liability real dataset available in the
{\sf R} \cite{R Core Team} package {\tt CASdatasets} hosted by Dutang--Charpentier \cite{Dutang--Charpentier}. It contains information on insurance policies and claim frequencies of $n = 500,000$ Swiss car drivers\footnote{The dataset can be downloaded under \url{https://people.math.ethz.ch/~wueth/Lecture/MTPL_data.csv}.}. For each policy $1 \leq i \leq n$, the number of claims $N_i \in \mathbb{N}$ occurred during an exposure period $v_i \in (0,1]$ (years-at-risk) is available. The sum of the exposures $(v_i)_{i=1}^n$ is equal to $253,022$ years, indicating that some policyholders were
covered for a period of less than one year in this portfolio, and as one might expect in motor liability insurance, most policies do not suffer any claim; see Table \ref{Tab:description data}.

\begin{table}[htb!]
    \centering
    \begin{tabular}{l c c c c}
      \toprule
      Number of claims occurred for each policy& 0 & 1 & 2 & 3\\
      \midrule
      Number of policies & $475,153$ & $23,773$ & $1012$ & $62$ \\
      Total exposure & $235,142$ & $17,021$ & $811$ & $48$ \\
      \bottomrule
    \end{tabular}
    \caption{Number of policies and total exposure within the portfolio that is split with respect to the number of claims occurred for each policy.}
    \label{Tab:description data}
\end{table} 
In addition to the number of claims and the exposure, features containing information on each policy are provided and collected into vectors $\bX_i = (X_{i,1}, \dots, X_{i,8})^\top$ as follows :
\begin{itemize}
    \item $X_{i,1}$ : age of the driver ({\tt age}), continuous feature in $\{18, \dots, 90\}$ years;
    \item $X_{i,2}$ : age of the car ({\tt ac}), continuous feature in $\{0, \dots, 35\}$ years;
    \item $X_{i,3}$ : power of the car ({\tt power}), continuous feature in $\{1, \dots, 12\}$;
    \item $X_{i,4}$ : fuel type of the car ({\tt gas}), binary feature (regular petrol/diesel);
    \item $X_{i,5}$ : anonymized brand of the car ({\tt brand}), categorical feature with $11$ labels;
    \item $X_{i,6}$ : area code ({\tt area}), categorical feature with $6$ labels;
    \item $X_{i,7}$ : density at the living place of the driver ({\tt dens}), continuous feature in $[1, 27000]$;
    \item $X_{i,8}$ : Swiss canton of the car license plate ({\tt ct}), categorical feature with $26$ labels.
\end{itemize}

Using a synthetically generated dataset has the advantage that the true annual frequency of claims is available for each policyholder. This frequency ranges from $0.004$ to $1.396$ for the considered portfolio and is low for most policies; see Figure \ref{Fig : Density Freq}. We refer to Appendix A in Wüthrich--Buser \cite{Wüthrich--Buser} for an extended description of the dataset.

\begin{figure}[htb!]
\begin{center}
\includegraphics[width=0.8\textwidth]{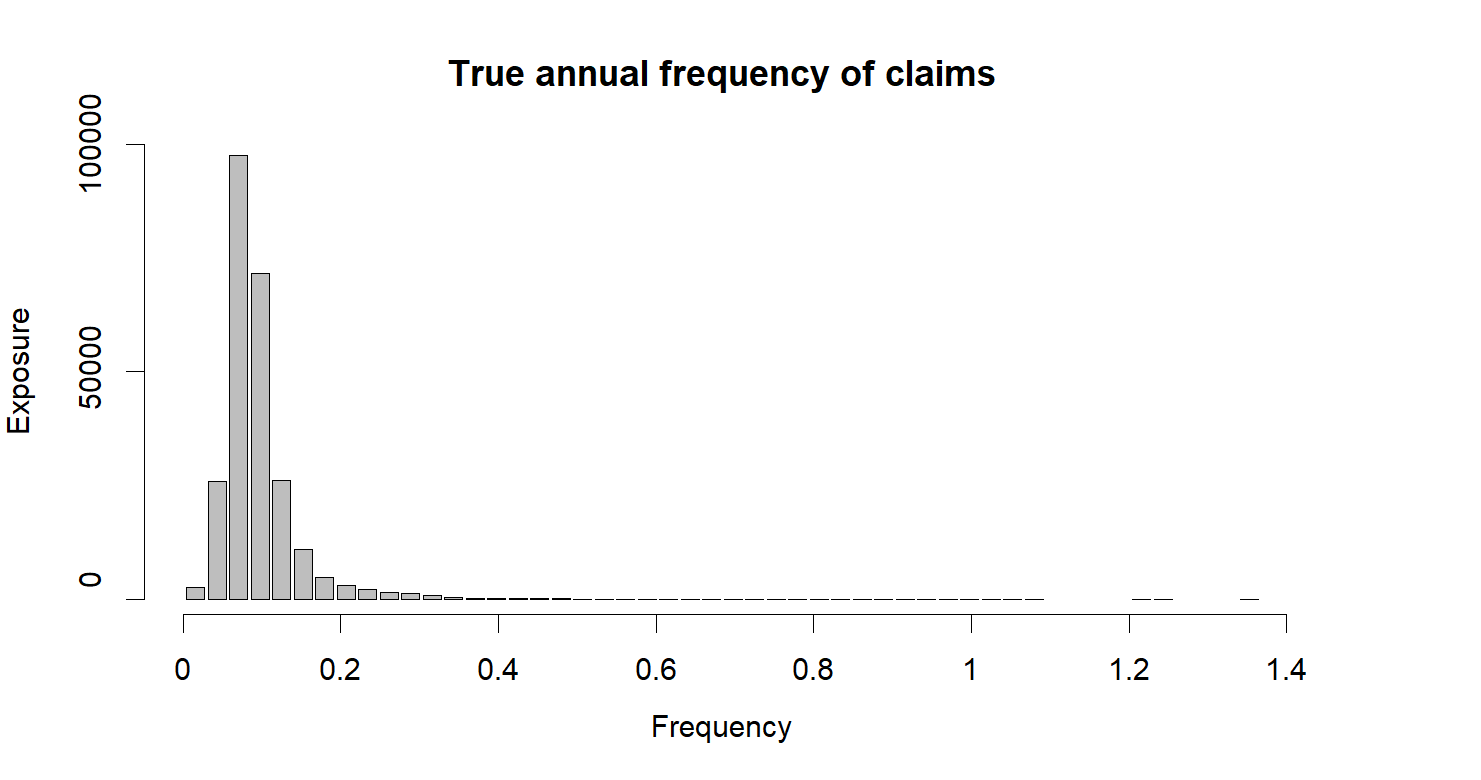}
\end{center}
\vspace{-.7cm}
\caption{Histogram of the true annual frequency of claims for each policy. The $y$-axis provides the aggregated exposures.}
\label{Fig : Density Freq}
\end{figure}

\subsection{Considered models}

\label{sec considered models}
The dataset under consideration $\D = \{(Y_i, \bX_i, v_i)\}_{i=1}^n$ consists of i.i.d.~triplets following the law of some triplet $(Y, \bX, v)$. Moreover, we know that the number of claims for each policyholder was generated from 
$$
N_i \sim \textrm{Poi} (\mu^{*}(\bX_i)v_i), \quad 1 \leq i \leq n,
$$
where $\mu^{*} : \X \to (0, \infty)$ denotes the true annual frequency of claims. This frequency actually provides the true conditional mean of the responses $Y_i = N_i/v_i$, i.e.,
$$
    \E[Y_i \, | \, \bX_i] =  \mu^{*}(\bX_i),\quad 1 \leq i \leq n.
$$
We follow Section \ref{test procedure 1} and divide the available dataset into a learning, a boosting and a test set. Then, we consider eight different regression functions : 

\begin{itemize}
    \item $\mu^{*} : \X \to (0, \infty)$, the true conditional mean.
    \item $\widehat{\mu}_{\l}^{\textrm{hom}} : \X \to (0, \infty)$, the null model corresponding to the homogeneous (weighted) mean of the learning set, i.e.,
    \begin{equation}
        \label{hom mean}
        \widehat{\mu}_{\l}^{\textrm{hom}}(\bx) = \frac{\sum_{i \in \LL} v_i Y_i}{\sum_{i \in \LL} v_i}, \quad \textrm{for } \bx \in \X.
    \end{equation}
    \item $\widehat{\mu}_{\l}^{\textrm{GLM}} : \X \to (0, \infty)$, a generalized linear model (GLM) using all covariates. It corresponds to the model GLM3 in Wüthrich--Buser \cite{Wüthrich--Buser}, i.e., we pre-process the features $\tt{age}$, $\tt{ac}$ and $\tt{dens}$ in the same way.
    \item $\widehat{\mu}_{\l}^{\textrm{GAM}} : \X \to (0, \infty)$, a generalized additive model (GAM) using all covariates. It corresponds to the model GAM2 in Denuit et al.~\cite{Denuit2}.
    \item $\widehat{\mu}_{\l}^{\textrm{DNN1}} : \X \to (0, \infty)$, a deep neural network (DNN) using all covariates. It mainly corresponds to the model DNN2 in Wüthrich--Buser \cite{Wüthrich--Buser}, with the exception that the batch size is taken to be $100$ and the maximal number of epochs is $100$. During training, we first split the learning set into an $80\%$ training subset and a $20\%$ validation subset, and select the optimal number of epochs by minimizing the out-of-sample Poisson deviance loss on the validation subset. Next, the model is refitted on the entire learning set using the selected number of epochs. This procedure aims to mitigate overfitting; we refer the reader to Chapter 5 in Wüthrich--Buser \cite{Wüthrich--Buser} for an extended description of such a method.
    \item $\widehat{\mu}_{\l}^{\textrm{DNN2}} : \X \to (0, \infty)$, a deep neural network using all covariates. The difference with the previous model is that, this time, the model is directly fitted on the entire learning set using $100$ epochs, leading to overfitting.
    \item $\widehat{\mu}_{\l}^{\textrm{GBM1}} : \X \to (0, \infty)$, a GBM using all covariates. This model is fitted as in Listing 7.2 in Wüthrich--Buser \cite{Wüthrich--Buser}, with the difference that the depth of the trees is chosen to be $3$, the maximum number of iterations is $500$, the learning rate is $0.5$, the shrinkage is $0$ and the minimal observations per bucket is $5000$. In the fitting stage, we use the same approach as for the model DNN1 to select the optimal number of boosting steps in order to prevent overfitting. Then, we fit the model on the entire learning set using the selected number of steps.
    \item $\widehat{\mu}_{\l}^{\textrm{GBM2}} : \X \to (0, \infty)$, a GBM using all covariates. The difference with the previous model is that the number of iterations is now chosen to be equal to the large value $500$, leading to overfitting.
\end{itemize}

All the above regression functions, except the true conditional mean, have been fitted on the learning set $\l$. The goal of the next sections is to assess the calibration and auto-calibration of these regression functions by considering two different cases:
\begin{itemize}
    \item {\it Case 1}. The sets $\mathcal{L}, \B$ and $\T$ represent $60 \%, 20 \%$ and $20 \%$ of the dataset $\D$, respectively.
    \item {\it Case 2}. The sets $\mathcal{L}, \B$ and $\T$ represent $80 \%, 10 \%$ and $10 \%$ of the dataset $\D$, respectively.
\end{itemize}

As this dataset contains an exposure $v$, note that both calibration properties have to be understood as follows. A regression model $\widehat{\mu}_\l : \X \to \R$ is calibrated for $(Y, \bX, v)$ if
\begin{equation}
    \label{calibration v}
    \E[v\widehat{\mu}_\l(\bX) \, |\, \bX] = \E[vY \, |\, \bX], \quad \p\textrm{-a.s.},
\end{equation}
and it is auto-calibrated for $(Y, \bX, v)$ if 
\begin{equation}
    \label{auto-calibration v}
\E[v\widehat{\mu}_\l(\bX) \, |\, \widehat{\mu}_\l(\bX)] = \E[vY \, |\, \widehat{\mu}_\l(\bX)], \quad \p\textrm{-a.s.}
\end{equation}
Therefore, the residuals involved in the definition of the random variables $Z_i$ in \eqref{prop theory cal Z_i} and \eqref{prop theory auto-cal Z_i} will be taken on the level of the number of claims $N = vY$ when computing the tests statistics, below. 
We finally point out that the definitions in Section \ref{sec cal auto cal} can be retrieved from \eqref{calibration v} and \eqref{auto-calibration v} by assuming that, for example, $v$ is independent of $\bX$ or $v = 1, \p\textrm{-a.s.}$

%We then additionally consider the isotonic recalibrated version of the three last models that we will denote by $\widehat{\mu}_{\l}^{Iso\, GLM} : \X \to \R$, $\widehat{\mu}_{\l}^{Iso\, GAM} : \X \to \R$ and $\widehat{\mu}_{\l}^{Iso\, NN} : \X \to \R$. \textit{Isotonic recalibration} is a method introduced by Wüthrich--Ziegel \cite{Wuethrich_Ziegel} in order to restore the auto-calibration of a regression function. It consists in fitting an isotonic regression on the responses by taking the regression function itself as a ranking function, leading to an empirically (i.e., on the level of the learning set $\l$) auto-calibrated regression function. Finally, as the true conditional mean is available in this example, we will also consider it in the next sections. This makes a total of 1

\subsection{Testing for calibration}

\label{sec num example test cal}

In order to assess the calibration of the above regression functions, we apply the procedure described in Section \ref{sec assessing calibration}. That is, we first fit a benchmark GBM using Poisson boosting trees on both the learning and boosting sets. For this, we follow Listing 7.2 in Wüthrich--Buser \cite{Wüthrich--Buser} and choose the same hyperparameters as for $\widehat{\mu}_{\l}^{\textrm{GBM1}} : \X \to (0, \infty)$, except for the learning rate that is now taken to be equal to $0.1$. Moreover, we select the optimal number of boosting iterations as above to prevent for overfitting. Table \ref{Tab_boost} reports this optimal number for both considered cases. It also shows the Poisson deviance losses $L(\cdot, \widehat{\mu}^\textrm{boost}_{\l, \B})$ attained by the benchmark GBM $\widehat{\mu}^{\mathrm{boost}}_{\ell,\mathcal{B}}:\mathcal{X}\to(0, \infty)$ on the learning and test sets, as well as the empirical Kullback-Leibler distance of the benchmark GBM with respect to the true conditional mean $\textrm{KL}_{\mu^{*}}(\cdot,\widehat{\mu}^\textrm{boost}_{\l, \B})$ evaluated on the test set. These metrics provide insight into the accuracy of the benchmark GBM and will be compared to the the metrics of the eight models under consideration shown in Table \ref{Tab1}, below.

\begin{table}[htb!]
  \begin{center}
 {\small   
\begin{tabular}{|l||c|c|c|c|}
\hline
  Models & Boost.~steps & $L(\l, \widehat{\mu}^\textrm{boost}_{\l, \B})$ & $L(\T, \widehat{\mu}^\textrm{boost}_{\l, \B})$ & $\textrm{KL}_{\mu^{*}}(\T,\widehat{\mu}^\textrm{boost}_{\l, \B})$ \\
  \hline\hline
{\it Case 1.} $\l : 60\%$, $\V : 20\%$, $\T : 20\%$ & -&- &- &- \\
\hline
Benchmark GBM & 217 & 27.704 & 27.503 & 0.072 \\
 \hline \hline
{\it Case 2.} $\l : 80\%$, $\V : 10\%$, $\T : 10\%$ &- &- &- &-\\
\hline
Benchmark GBM & 209 & 27.764 & 27.859 & 0.067 \\
\hline
\end{tabular}}
\end{center}
\caption{Number of boosting steps used to fit the benchmark GBM for each case. The Poisson deviance losses and the empirical Kullback-Leibler distances are reported in $10^{-2}$.}
\label{Tab_boost}
\end{table}

The benchmark GBM is then used to construct a test function $g_{\l,  \B}:\X\to\R$ that satisfies the requirements in Section \ref{sec assessing calibration}. We recall that those requirements are meant to push the test statistics $T_n^{\textrm{cal}}$ away from $0$ whenever the null-hypothesis of calibration is violated. In the considered implementation below, the test function is chosen to take values in the set $\{-1,0.01,1\}$ depending on the sign of the biases detected by the benchmark GBM. In principle, alternative constructions for the test function could also be considered. For example, one could take into account the size of the detected biases as well.

\noindent\hrulefill\\%[-0.35em]
\centerline{\textsc{Construction of a calibration test using Poisson boosting trees}}
\vspace{-0.35em}
\noindent\hrulefill

\bigskip

\begin{enumerate}
  \item Fit a benchmark GBM $\widehat{\mu}^\textrm{boost}_{\l, \B} : \X \to (0,\infty)$ using Poisson boosting trees on the learning set $\l$ and the boosting set $\B$.
  \item Define the function $g_{\l, \B}: \X \to \R$ with
  \begin{equation*}
      g_{\l, \B}(\bx) = \begin{cases}
          1, \, &\textrm{if } \,\widehat{\mu}^\textrm{boost}_{\l, \B}(\bx) >{\widehat{\mu}_\l(\bx),} \\
          0.01, \, &\textrm{if } \,\widehat{\mu}^\textrm{boost}_{\l, \B}(\bx)  = {\widehat{\mu}_\l(\bx),} \\
          -1, \, &\textrm{if } \,\widehat{\mu}^\textrm{boost}_{\l, \B}(\bx)  < {\widehat{\mu}_\l(\bx).}\\
      \end{cases}
  \end{equation*}
  \item Compute the test statistics $T_n^{\textrm{cal}}$ in \eqref{convergence cal} using the random variables
    $$
    Z_i = (v_iY_i - v_i\widehat{\mu}_{\l}(\bX_i)) g_{\l, \B}(\bX_i), \quad 1 \leq i \leq n,
    $$
    where $n = |\T|$.
    \item Reject the calibration of the regression function $\widehat{\mu}_\l : \X \to \R$ at the pre-specified confidence level $1-\alpha \in (0,1)$ whenever
    $$
        |T_n^{\textrm{cal}}| > \Phi^{-1}(1-\alpha/2),
    $$
    where $\Phi(\cdot)$ denotes the standard Gaussian distribution.
\end{enumerate}
\noindent\hrulefill

\bigskip

To better understand the role of the test function $g_{\l, B} : \X \to \R$ for each considered model, we plot in Figure \ref{new_Fig1} the mean estimates against the true conditional means on the test set $\T$. Points for which $g_{\l,\B} = 1$ are shown in black, those for which $g_{\l,\B} = 0.01$ in green, and those for which $g_{\l,\B} = -1$ in red. We restrict attention to the case where the learning set comprises 60\% of the data, although similar plots can be obtained for the other case as well.  

\clearpage
\vspace*{\fill}

\begin{center}
\begin{tabular}{cc}
\includegraphics[width=.4\textwidth]{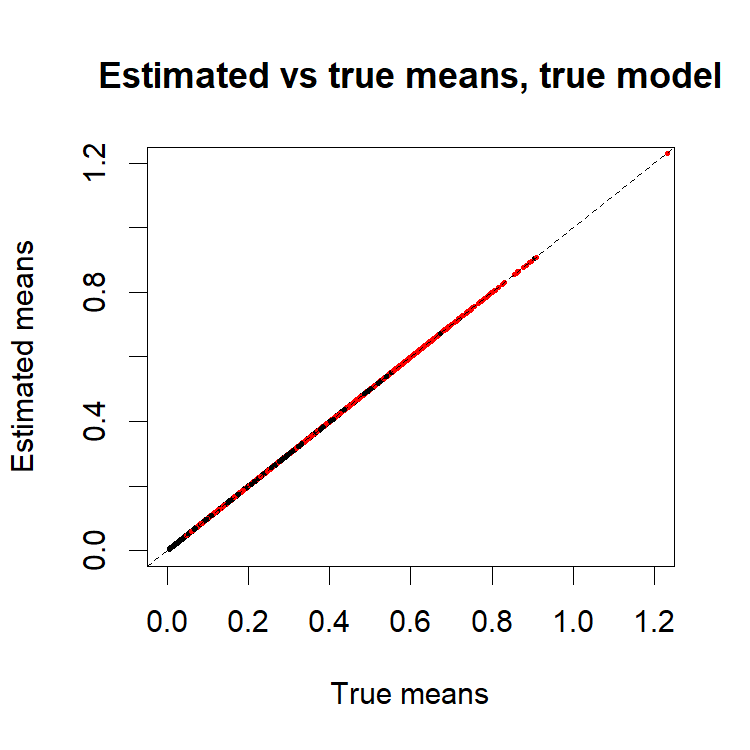} &
\includegraphics[width=.4\textwidth]{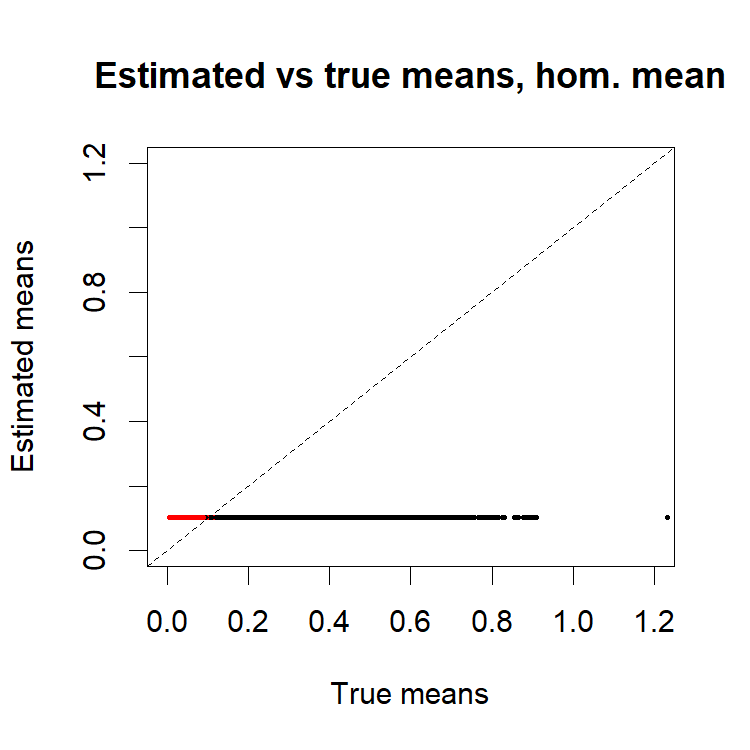} \\[0.3cm]

\includegraphics[width=.4\textwidth]{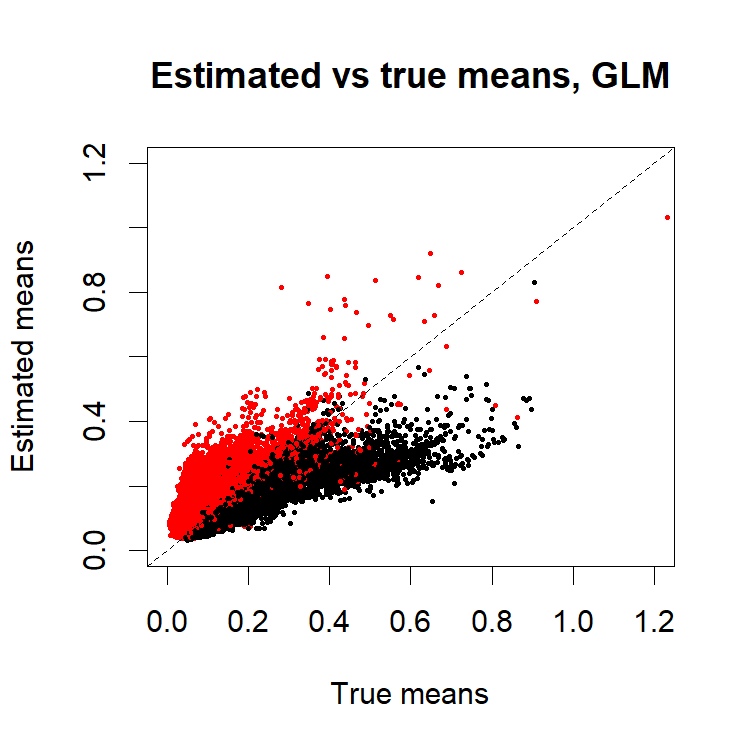} &
\includegraphics[width=.4\textwidth]{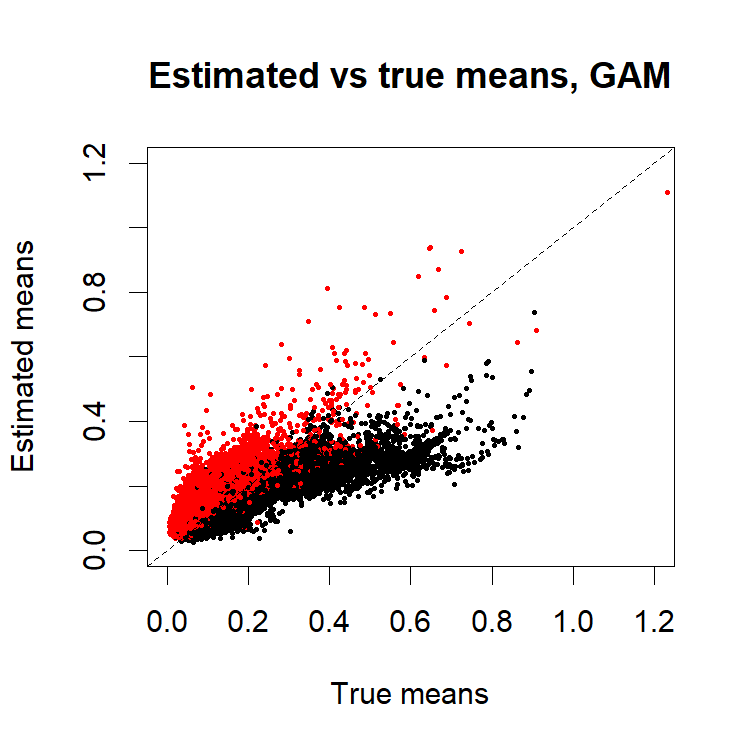} \\[0.3cm]

\includegraphics[width=.4\textwidth]{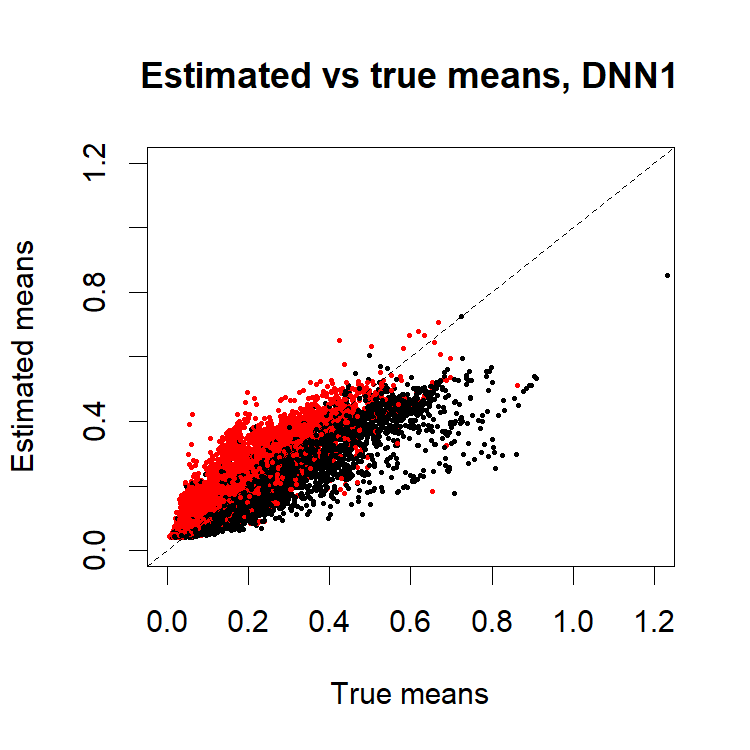} &
\includegraphics[width=.4\textwidth]{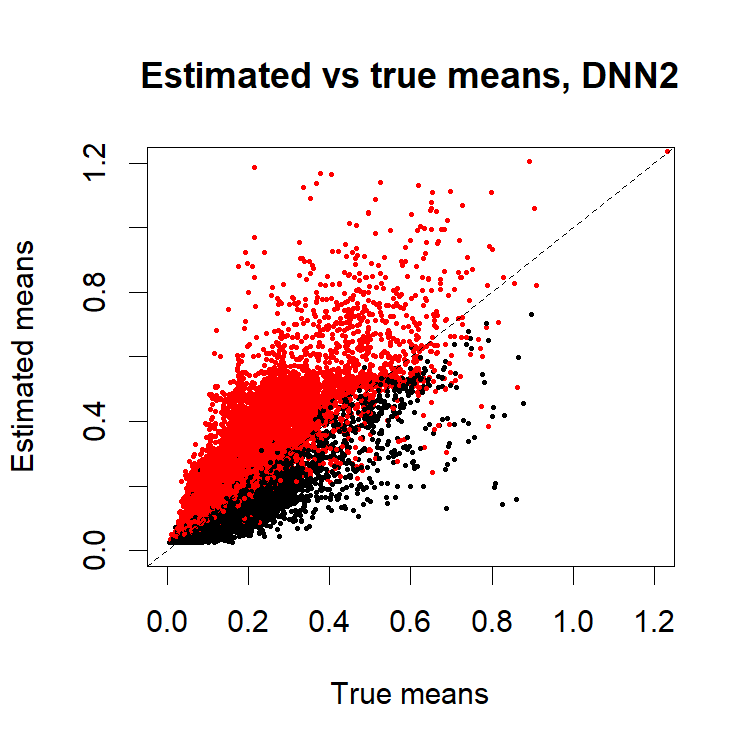}
\end{tabular}
\end{center}

\vspace*{\fill}

\begin{center}
\begin{minipage}{0.4\textwidth}
\centering
\includegraphics[width=\linewidth]{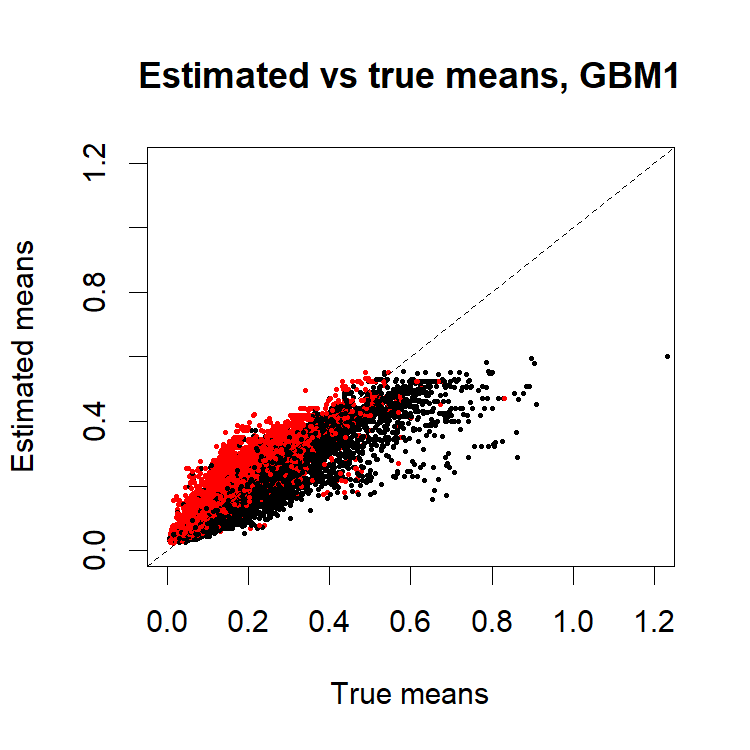}
\end{minipage}
\hspace{0.03\textwidth}
\begin{minipage}{0.4\textwidth}
\centering
\includegraphics[width=\linewidth]{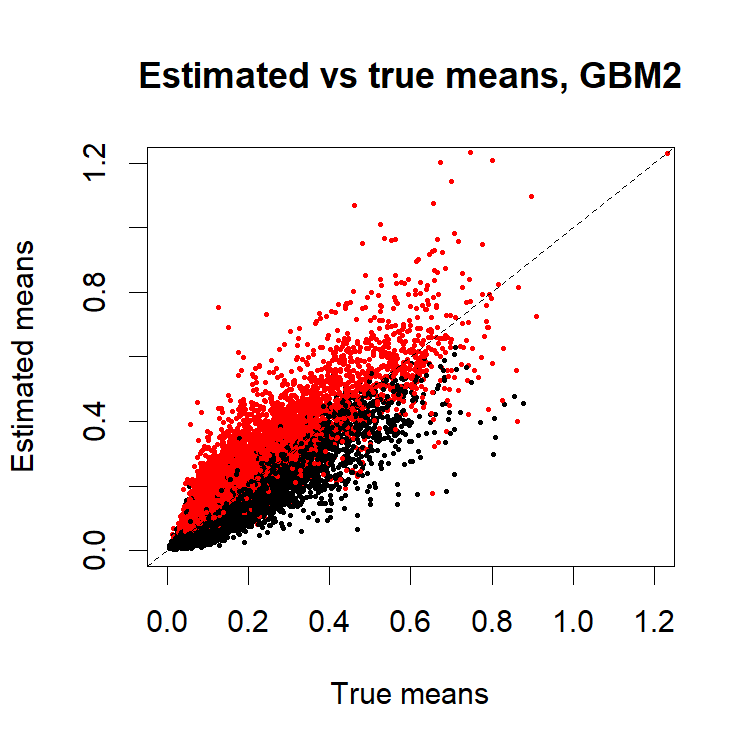}
\end{minipage}

\captionof{figure}{
Plots of the mean estimates against the true conditional means on the test set $\T$ for eight different models. Only the case where the size of the learning set corresponds to $60\%$ of the dataset $\D$ is considered. Points for which $g_{\l,\B} = 1$ are shown in black, those for which $g_{\l,\B} = 0.01$ in green, and those for which $g_{\l,\B} = -1$ in red.
}
\label{new_Fig1}

\end{center}

The first observation is that, with the exception of the true conditional mean, the benchmark GBM $\widehat{\mu}^\textrm{boost}_{\l, \B} : \X \to (0,\infty)$ successfully identifies most positive and negative biases on the test set $\T$. Indeed, the majority of black points lie below the diagonal, whereas most red points lie above it. Nevertheless, a few red points appear below the diagonal and a few black points above it. This means that for some sets of features $\bx \in \X$, the differences
$$
\widehat{\mu}^\textrm{boost}_{\l, \B}(\bx) -\widehat{\mu}_\l(\bx) \quad \textrm{and} \quad \mu^{*}(\bx) -\widehat{\mu}_\l(\bx)
$$
have opposite signs. Additionally, note that the benchmark GBM also detects positive and negative biases for the true true model on the learning and boosting sets although it is calibrated. This simply indicates that the benchmark GBM does not coincide with the true conditional mean. Interestingly, we further notice that the value $g_{\l, \B} = 0.01$ is never taken for any of the considered models, see Remark \ref{rem theory cal}.

In fact, the plots in Figure \ref{new_Fig1}, that would not be available in practice because the true conditional means would be unknown, provide a graphical assessment of calibration for each model. For instance, a model is calibrated if all points lie on the diagonal, and the more points are close to it, the more accurate the model is. According to Figure \ref{new_Fig1}, the best fits are provided by the models DNN1 and GBM1, for which the underlying points lie closest to the diagonal among all models. For the models DNN2 and GBM2, we see that the points are centered around the diagonal, but their spread is considerably larger. This is a typical feature of overfitting models, for which large true conditional means may be associated with small estimated means, and vice versa. In contrast, the models GLM and GAM exhibit a similar level of variability than the models DNN1 and GBM1. We observe, however, a bend on the right-hand side of the plots, indicating that large true conditional means are assigned to small mean estimates. The homogeneous model, for its part, provides the worst fit as the corresponding points are not even clustered around the diagonal.

Our goal is perform calibration tests using the above procedure to understand whether the violations of calibration in Figure \ref{new_Fig1} are statistically significant. The results of the performed tests are given in Table \ref{Tab1}, where we additionally provide the Poisson deviance losses of all considered models $\widehat{\mu}_\l : \X \to (0, \infty)$ on the learning and test sets, as well as the empirical Kullback-Leibler distance of these models with respect to the true conditional mean evaluated on the test set. For both cases, our test rejects the calibration of all models at a confidence level of $1-\alpha = 0.95$, except for the true conditional mean. Moreover, note that the ranking of the accuracy of the models provided by the empirical Kullback-Leibler distance $\textrm{KL}_{\mu^{*}}(\T,\widehat{\mu}_\ell)$ matches the ranking induced by the obtained $p$-values, with only a few exceptions. Since this measure would not be available in practice, one may instead rely on the out-of-sample Poisson deviance loss $L(\T, \widehat{\mu}_\ell)$ to compare models. The resulting ranking remains largely consistent with the ranking obtained from the p-values, differing only in a few cases. Interestingly, we also notice that the $p$-values for the second case are systematically higher than for the first case, except for the true model. The reason is that the size of the test set $\T$ is smaller in the second case and models fitted on $80 \%$ of the data generally achieve a better fit.

\begin{table}[htb!]
  \begin{center}
 {\small   
\begin{tabular}{|l||c|c|c|c|}
\hline
  Models & $p$-values & $L(\l, \widehat{\mu}_\ell)$ & $L(\T, \widehat{\mu}_\ell)$ & $\textrm{KL}_{\mu^{*}}(\T,\widehat{\mu}_\ell)$ \\
  \hline\hline
{\it Case 1.} $\mathcal L : 60\%$, $\V : 20\%$, $\T : 20\%$ & - & - & - & - \\
\hline
True model & $1.54 \cdot 10^{-1}$ & 27.778 & 27.333 & 0 \\
Homogeneous mean & $2.13 \cdot 10^{-140}$ & 29.168 & 28.706 & 0.671 \\
GLM & $3.62 \cdot 10^{-52}$ & 28.229 & 27.877 & 0.242 \\
GAM & $1.73 \cdot 10^{-31}$ & 28.180 & 27.788 & 0.218 \\
DNN1 & $1.30 \cdot 10^{-17}$ & 27.941 & 27.633 & 0.141 \\
DNN2 & $8.23 \cdot 10^{-46}$ & 27.842 & 27.845 & 0.246 \\
GBM1 & $1.02 \cdot 10^{-18}$ & 27.850 & 27.609 & 0.118 \\
GBM2 & $3.62 \cdot 10^{-38}$ & 26.846 & 27.776 & 0.234 \\
 \hline \hline
{\it Case 2.} $\mathcal L : 80\%$, $\V : 10\%$, $\T : 10\%$ & - & - & - & - \\
\hline
True model & $5.58 \cdot 10^{-2}$ & 27.827 & 27.677 & 0 \\
Homogeneous mean & $6.24 \cdot 10^{-67}$ & 29.207 & 29.043 & 0.688 \\
GLM & $2.08 \cdot 10^{-24}$ & 28.276 & 28.230 & 0.247 \\
GAM & $5.62 \cdot 10^{-17}$ & 28.230 & 28.124 & 0.222 \\
DNN1 & $3.24 \cdot 10^{-7}$ & 27.907 & 27.960 & 0.128 \\
DNN2 & $1.39 \cdot 10^{-8}$ & 27.741 & 27.959 & 0.170 \\
GBM1 & $1.80 \cdot 10^{-4}$ & 27.823 & 27.849 & 0.083 \\
GBM2 & $7.85 \cdot 10^{-23}$ & 27.018 & 28.140 & 0.198 \\
\hline
\end{tabular}}
\end{center}
\caption{$p$-values of the calibration test for eight different models and two different cases. The out-of-sample Poisson deviance losses as well as the empirical Kullback--Leibler distances are reported in $10^{-2}$.}
\label{Tab1}
\end{table}

This example shows that the proposed testing procedure in Section \ref{sec assessing calibration} leads to statistical tests that are able to detect violations of calibration for all the fitted models. This is actually not surprising as the metrics in Table \ref{Tab_boost} show that the benchmark GBMs exhibit a higher predictive performance than all the other considered models, except for the true conditional mean. As pointed out at the beginning of this section, we emphasize that more complex test functions could in principle be used for the test decision. We refrain from doing so here as the performed tests already provide satisfactory results by only taking into account the sign of the biases between the benchmark GBMs and the originally fitted regression functions. Finally, note that a comparison with the calibration test of Delong et al.~\cite{Delong} is provided in Appendix \ref{appendix cal bands}.

\subsection{Testing for auto-calibration}

\label{sec num example test auto cal}

As auto-calibration is a weaker property than calibration, we might expect that some of the models under consideration are auto-calibrated, but not calibrated. In fact, we know that the true conditional mean is auto-calibrated as it is calibrated, whereas the homogeneous mean $\widehat{\mu}_\l^{\textrm{hom}} : \X \to (0, \infty)$ is empirically auto-calibrated by construction (this property is only satisfied on the learning set $\l$), see \eqref{hom mean}. %Therefore, we expect that our test does not reject the auto-calibration of those models. 
As above, we adapt the general testing procedure for auto-calibration in Section \ref{sec assessing auto calibration} to the Poisson case.

The main difference with respect to the previous section will lie in the second step of our implementation, where we construct the test function $h_{\l, \B} : \widehat{\mu}_\l(\X) \to \R$. For instance, a first map $\tilde{g}_{\l, \B}:\X\to\R$ taking values in the set $\{-1, 0.01, 1\}$ will be introduced, as for the test function $g_{\l, \B} : \X \to \R$ in the previous section. However, as this map should satisfy \eqref{requirement auto-calibration}, see Section \ref{sec assessing auto calibration}, its values will not be directly determined by the biases detected by the benchmark GBM. The reason is that one might have
    $$
    \widehat{\mu}^\textrm{boost}_{\l, \B}(\bx) < \widehat{\mu}_\l (\bx) = \widehat{\mu}_\l (\bx') < \widehat{\mu}^\textrm{boost}_{\l, \B}(\bx'),
    $$
for two different sets of features $\bx, \bx' \in \X$, leading to $-1 = g_{\l, \B}(\bx) \neq g_{\l, \B}(\bx') = 1$. Therefore, an approximation of the benchmark GBM fulfilling \eqref{requirement auto-calibration} will be used instead for the construction of the map $\tilde{g}_{\l, \B}:\X\to\R$. This approximation will be denoted by $\tilde{\mu}^{\textrm boost}_{\l, \B} : \X \to (0,\infty)$ and be constructed as follows. First, the sets of features $\bx \in \X_{\ell} \cup \X_\B$ will be grouped according to their mean estimates and weighted averages of the values of the benchmark GBM will be taken, i.e., 
\begin{equation}
        \label{emp weighted avg}
  \widetilde{\mu}^\textrm{boost}_{\l, \B}(\bx) = \frac{\sum_{i \in {\LL \cup \BB}} v_i \widehat{\mu}^{boost}_{\ell, \V}(\bx_i) \mathds{1}_{\{\widehat{\mu}_\l(\bx_i) = \widehat{\mu}_\l(\bx)\}}}{\sum_{i \in {\LL \cup \BB}} v_i \mathds{1}_{\{\widehat{\mu}_\l(\bx_i) = \widehat{\mu}_\l(\bx)\}}},
      \end{equation}
for $\bx \in \X_{\l} \cup  \X_\B$. This leads to pairs 
    $$\left(\widehat{\mu}_\ell(\bx), \widetilde{\mu}^\textrm{boost}_{\l, \B}(\bx)\right)_{\bx \in \X_{\ell} \cup \X_\B},$$
that provide the average value of the benchmark GBM for each mean estimate appearing in the learning and boosting sets. Then, a linear interpolation will be taken between these pairs in order to extend the domain of definition of the approximation of the benchmark GBM $\widetilde{\mu}^\textrm{boost}_{\l, \B}~:~\X_{\ell}~\cup~\X_\B~\to~(0, \infty)$ to the whole feature space $\X$. This approximation will be used to construct a map $\tilde{g}_{\l, \B}:\X\to\R$ that detects positive and negative biases of the originally fitted regression function, while satisfying \eqref{requirement auto-calibration}. The test function $h_{\l, \B} : \widehat{\mu}_\l(\X) \to \R$, for its part, will finally be obtained by setting
$$\widetilde{g}_{\l, \B} = h_{\l, \B} \circ \widehat{\mu}_\l.$$
The resulting test is provided below and we emphasize that other methods could have been used for the approximation of the benchmark GBM as, for example, cubic interpolation or splines.

\newpage

%The requirement in \eqref{requirement auto-calibration} ensures that the map is unique. Moreover, the requirements on the test function $h_{\l, \B} : \widehat{\mu}_\l(\X) \to \R$ in Section \ref{sec assessing auto calibration} have to be fulfilled, i.e.~, the test function should be positive, respectively negative, whenever the boosted regression function detects a positive, respectively negative bias for $\widehat{\mu}_\l : \X \to (0,\infty)$.  we take weighted averages are taken and a linear interpolation is performed in order to obtain a map $\tilde{g}_{\l, \B}: \X \to \R$ that satisfies \eqref{requirement auto-calibration} for any $\bx, \bx' \in \X$, and that is well-defined for the whole feature space $\X$. We recall that this is needed to ensure that the function $h_{\l, \B} : \widehat{\mu}_\l(\X) \to \R$ in the third step exists and is unique. Finally, note that one could have used, for example, cubic interpolation or splines to extend the domain of definition of $\widetilde{\mu}^\textrm{boost}_{\l, \B} : \X_{\ell} \cup \X_\B \to (0, \infty)$ to the whole feature space $\X$.

\noindent\hrulefill\\%[-0.35em]
\centerline{\textsc{Construction of an auto-calibration test using Poisson boosting trees}}
\vspace{-0.35em}
\noindent\hrulefill

\bigskip

\begin{enumerate}
  \item Fit a benchmark GBM $\widehat{\mu}^\textrm{boost}_{\l, \B} : \X \to (0,\infty)$ using Poisson boosting trees on the learning set $\l$ and the boosting set $\B$.
  %\item Compute the empirical recalibration $\widetilde{\mu}^\textrm{boost}_{\l, \B} : \X \to (0, \infty)$ of the boosted regression function with respect to the originally fitted regression function by defining the weighted averages
    %\begin{equation}
        %\label{emp weighted avg}
  %\widetilde{\mu}^\textrm{boost}_{\l, \B}(\bx) = \frac{\sum_{i \in {\mathcal{L} \cup \B}} v_i \widehat{\mu}^{boost}_{\ell, \V}(\bx_i) \mathds{1}_{\{\widehat{\mu}_\l(\bx_i) = \widehat{\mu}_\l(\bx)\}}}{\sum_{i \in {\mathcal{L} \cup \B}} v_i \mathds{1}_{\{\widehat{\mu}_\l(\bx_i) = \widehat{\mu}_\l(\bx)\}}},
      %\end{equation}
    %for $\bx \in \X_{\l} \cup  \X_\B$ and taking a linear interpolation between the pairs 
    %$$\left(\widehat{\mu}_\ell(\bx), \widetilde{\mu}^\textrm{boost}_{\l, \B}(\bx)\right)_{\bx \in \X_{\ell} \cup \X_\B}.$$
    \item Compute the weighted averages in \eqref{emp weighted avg} for $\bx \in \X_{\l} \cup  \X_\B$ and take a linear interpolation between the pairs 
    $$\left(\widehat{\mu}_\ell(\bx), \widetilde{\mu}^\textrm{boost}_{\l, \B}(\bx)\right)_{\bx \in \X_{\ell} \cup \X_\B}.$$
    This linear interpolation leads to the extension of the images in \eqref{emp weighted avg} to the whole feature space, which defines an approximation of the benchmark GBM $$\widetilde{\mu}^\textrm{boost}_{\l, \B} : \X \to (0, \infty).$$ Then, introduce the function $\widetilde{g}_{\l, \B}: \X \to \R$ with
  \begin{equation*}
      \widetilde{g}_{\l, \B}(\bx) = \begin{cases}
          1, \, &\textrm{if } \,\widetilde{\mu}^\textrm{boost}_{\l, \B}(\bx) >{\widehat{\mu}_\l(\bx),} \\
          0.01, \, &\textrm{if } \,\widetilde{\mu}^\textrm{boost}_{\l, \B}(\bx)  = {\widehat{\mu}_\l(\bx),} \\
          -1, \, &\textrm{if } \,\widetilde{\mu}^\textrm{boost}_{\l, \B}(\bx)  < {\widehat{\mu}_\l(\bx).}\\
      \end{cases}
  \end{equation*}
  \item Compute the test statistics $T_n^{\textrm{ac}}$ in \eqref{convergence auto cal} using the random variables
    $$
    Z_i = (v_iY_i - v_i\widehat{\mu}_{\l}(\bX_i)) h_{\l, \B}(\widehat{\mu}_\l(\bX_i)), \quad 1 \leq i \leq n,
    $$
    with $n = |\T|$, and where $h_{\l, \B} : \widehat{\mu}_\l(\X) \to \R$ is defined through $\widetilde{g}_{\l, \B} = h_{\l, \B} \circ \widehat{\mu}_\l$.
    \item Reject the auto-calibration of the model $\widehat{\mu}_\l : \X \to \R$ at the pre-specified confidence level $1-\alpha \in (0,1)$ whenever
    $$
        |T_n^{\textrm{ac}}| > \Phi^{-1}(1-\alpha/2),
    $$
    where $\Phi(\cdot)$ denotes the standard Gaussian distribution.
\end{enumerate}
\noindent\hrulefill

\iffalse
\begin{rem}
    \textnormal{The main difference with respect to the construction of the calibration test in Section \ref{sec num example test cal} lies in the second step, where weighted averages are taken and a linear interpolation is performed in order to obtain a map $\tilde{g}_{\l, \B}: \X \to \R$ that satisfies \eqref{requirement auto-calibration} for any $\bx, \bx' \in \X$, and that is well-defined for the whole feature space $\X$. We recall that this is needed to ensure that the function $h_{\l, \B} : \widehat{\mu}_\l(\X) \to \R$ in the third step exists and is unique. Finally, note that one could have used, for example, cubic interpolation or splines to extend the domain of definition of $\widetilde{\mu}^\textrm{boost}_{\l, \B} : \X_{\ell} \cup \X_\B \to (0, \infty)$ to the whole feature space $\X$.
    }
\end{rem}
\fi

\bigskip

Before presenting the results of the performed tests, we plot the mean estimates against the true means for two different models in Figure \ref{new_Fig2}. These plots should be compared to those in Figure \ref{new_Fig1} as we use the same colours to highlight the values taken by the test function. For both models, we observe that only the colour of the points differs from Figure \ref{new_Fig1}. The reason is that, now, all the dots lying on the same horizontal line have to be of the same colour due to \eqref{requirement auto-calibration}. This change is clearly noticeable for the homogeneous mean, for which all the points became red. This indicates that the single value taken by the homogeneous mean $\widehat{\mu}_\l : \X \to (0, \infty)$ is larger than the weighted average of the benchmark GBM $\widetilde{\mu}^\textrm{boost}_{\l, \B} : \X  \to (0, \infty)$ on the learning and boosting sets, see \eqref{emp weighted avg}. As the GAM is much more granular, the change with respect to the previous section is less easy to recognize in the right plot in Figure \ref{new_Fig2}. A closer look reveals, however, that more red points now lie below the diagonal, while more black points lie above it.

\begin{figure}[htb!]
\centering

\begin{minipage}{0.4\textwidth}
\centering
\includegraphics[width=\linewidth]{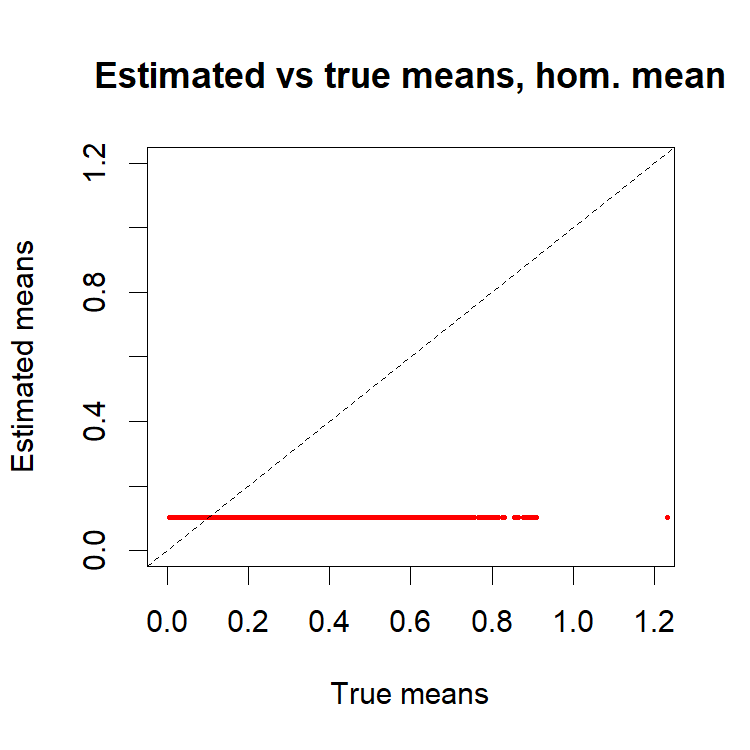}
\end{minipage}
\hspace{0.03\textwidth}
\begin{minipage}{0.4\textwidth}
\centering
\includegraphics[width=\linewidth]{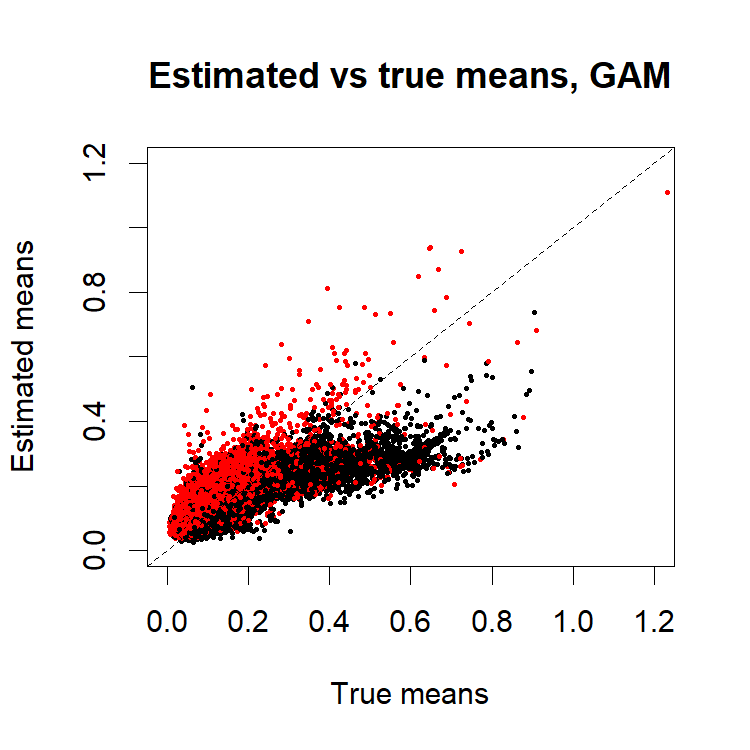}
\end{minipage}

\captionof{figure}{
Plots of the mean estimates against the true conditional means on the test set $\T$ for two different models. Only the case where the size of the learning set corresponds to $60\%$ of the dataset $\D$ is considered. Points for which $h_{\l, B} = 1$ are shown in black, those for which $g_{\l,\B} = 0.01$ in green, and those for which $g_{\l,\B} = -1$ in red. 
}
\label{new_Fig2}

\end{figure}

\begin{table}[b!]
  \begin{center}
 {\small   
\begin{tabular}{|l||c|c|}
\hline
  Models & \begin{tabular}{c}
 $p$-values of the \\
 test of Section \ref{sec assessing auto calibration}
\end{tabular} & \begin{tabular}{c}
 $p$-values of the\\
 test of Denuit et al.~\cite{Denuit2}
\end{tabular}\\
  \hline\hline
{\it Case 1.} $\mathcal L : 60\%$, $\B : 20\%$, $\T : 20\%$ &- &-\\
\hline
True model &$9.32 \cdot 10^{-1}$ &$5.94 \cdot 10^{-1}$\\
Homogeneous mean & $2.06 \cdot 10^{-1}$ &$2.98 \cdot 10^{-1}$\\
GLM & $3.98 \cdot 10^{-33}$ &$4.72 \cdot 10^{-1}$\\
GAM & $1.02 \cdot 10^{-14}$ &$6.20 \cdot 10^{-2}$\\
DNN1 & $1.34 \cdot 10^{-10}$ &$0$\\
DNN2 & $1.34 \cdot 10^{-33}$ &$0$\\
GBM1 & $2.22 \cdot 10^{-11}$ &$2.42 \cdot 10^{-1}$\\
GBM2 & $7.04 \cdot 10^{-25}$ &$0$\\

 \hline \hline
{\it Case 2.} $\mathcal L : 80\%$, $\B : 10\%$, $\T : 10\%$ &- &-\\
\hline
True model &$8.74 \cdot 10^{-1}$ &$3.78 \cdot 10^{-1}$\\
Homogeneous mean & $7.90 \cdot 10^{-1}$ &$2.18 \cdot 10^{-1}$\\
GLM & $6.48 \cdot 10^{-15}$ &$2.78 \cdot 10^{-1}$\\
GAM & $3.61 \cdot 10^{-8}$ &$1.80 \cdot 10^{-2}$\\
DNN1 & $1.32 \cdot 10^{-2}$ &$5.80 \cdot 10^{-2}$\\
DNN2 & $5.10 \cdot 10^{-5}$ &$6.00 \cdot 10^{-3}$\\
GBM1 & $4.44 \cdot 10^{-4}$ &$1.70 \cdot 10^{-1}$\\
GBM2 & $8.54 \cdot 10^{-14}$ &0\\
\hline
\end{tabular}}
\end{center}
\caption{$p$-values of the auto-calibration tests for eight different models and two different cases. The auto-calibration test of Section \ref{sec assessing auto calibration} is performed on the test set $\T$, whereas the test of Denuit et al.~\cite{Denuit2} is performed using 500 Monte Carlo simulations on the set $\D \setminus\mathcal{L}$.}
\label{Tab2}
\end{table}

The results of the above auto-calibration test are provided in Table \ref{Tab2}, where we compare them to the results of the test of Denuit et al.~\cite{Denuit2}. The latter test is performed using $B = 500$ Monte Carlo simulations on the set $\D \setminus \mathcal{L}$ as it does not require to use any boosting set $\B$. As expected, our test rejects the auto-calibration of all considered models at a confidence level of $1-\alpha = 0.95$, except for the true conditional mean and the homogeneous mean. Moreover, we see that our test is much more powerful than the test of Denuit et al.~\cite{Denuit2}, which rejects the auto-calibration of fewer models. In particular, note that there are no models for which auto-calibration gets rejected under the test of Denuit et al.~\cite{Denuit2}, but not under our test. Additionally, we notice that the $p$-values of our test in Table \ref{Tab2} are all of a similar magnitude or higher than the $p$-values in Table \ref{Tab1}.%, and this is not surprising because calibration is a stronger property than auto-calibration. 

In order to confirm the results induced by the above tests, we provide in Figure \ref{new_Fig3} the actual vs.~predicted plots of the eight models under consideration for the case where the learning set corresponds to 60 \% of the data. In these plots, responses are first binned according to the mean estimates given by $\widehat{\mu}_\l : \X \to (0,\infty)$. These bins are shown as rectangles below. Then, the empirical mean of the responses (actuals) is plotted against the average mean estimate in blue for each bin. 

\begin{center}

\begin{minipage}{0.4\textwidth}
\centering
\includegraphics[width=\linewidth]{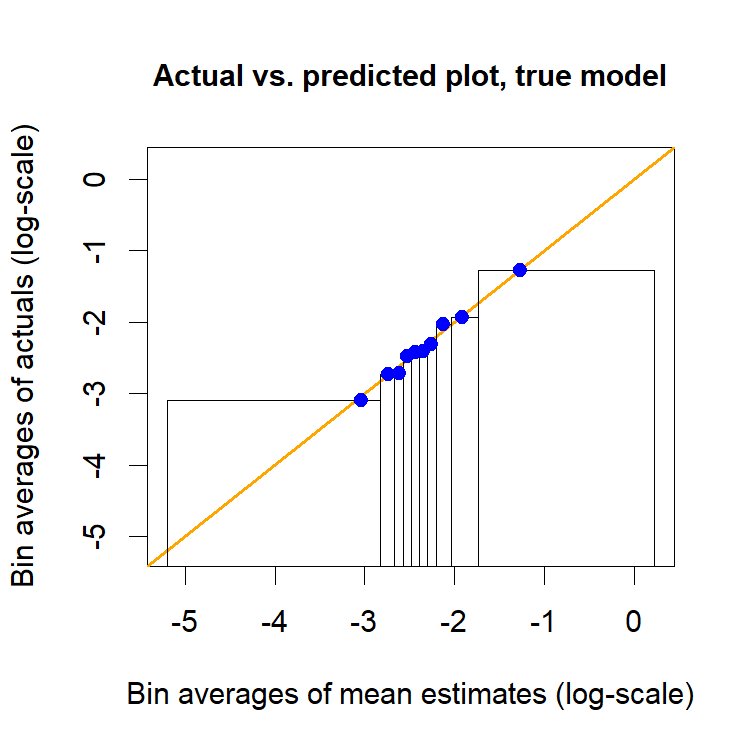}
\end{minipage}
\hspace{0.03\textwidth}
\begin{minipage}{0.4\textwidth}
\centering
\includegraphics[width=\linewidth]{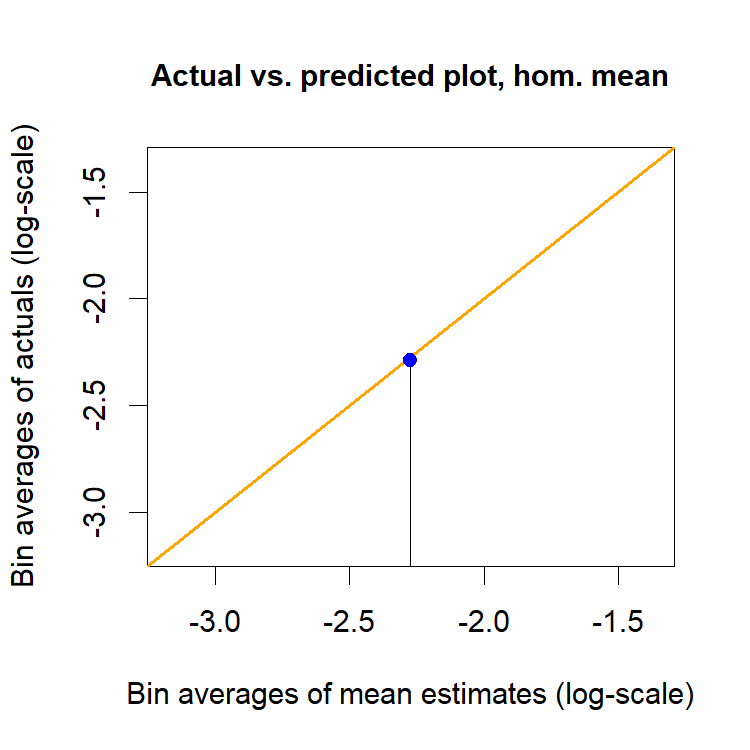}
\end{minipage}

\vspace{0.3cm}

\begin{minipage}{0.4\textwidth}
\centering
\includegraphics[width=\linewidth]{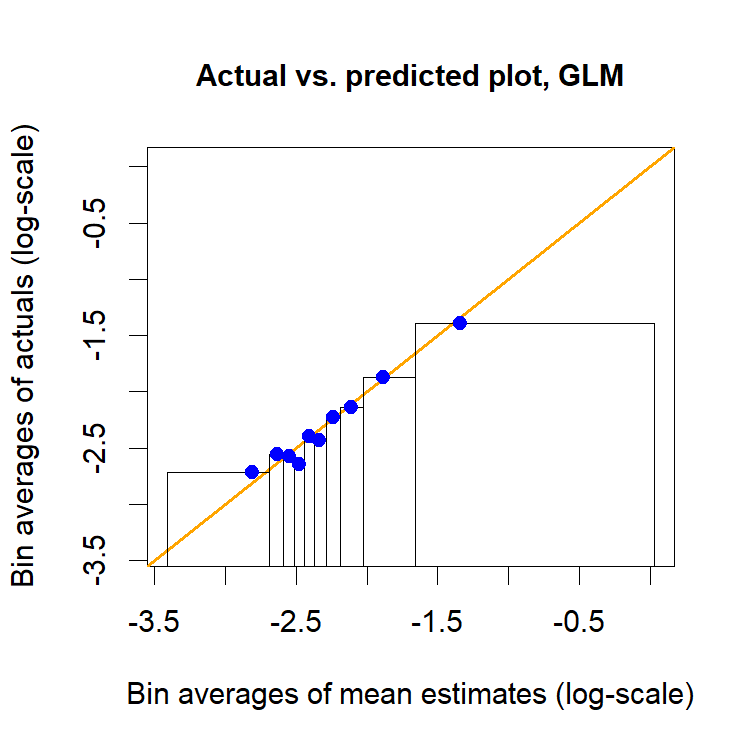}
\end{minipage}
\hspace{0.03\textwidth}
\begin{minipage}{0.4\textwidth}
\centering
\includegraphics[width=\linewidth]{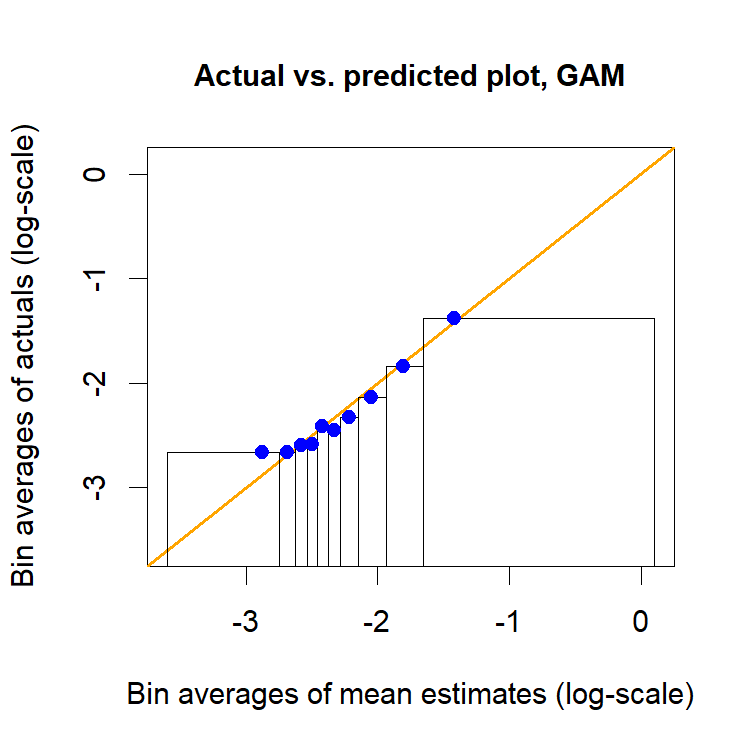}
\end{minipage}

\vspace{0.3cm}

\begin{minipage}{0.4\textwidth}
\centering
\includegraphics[width=\linewidth]{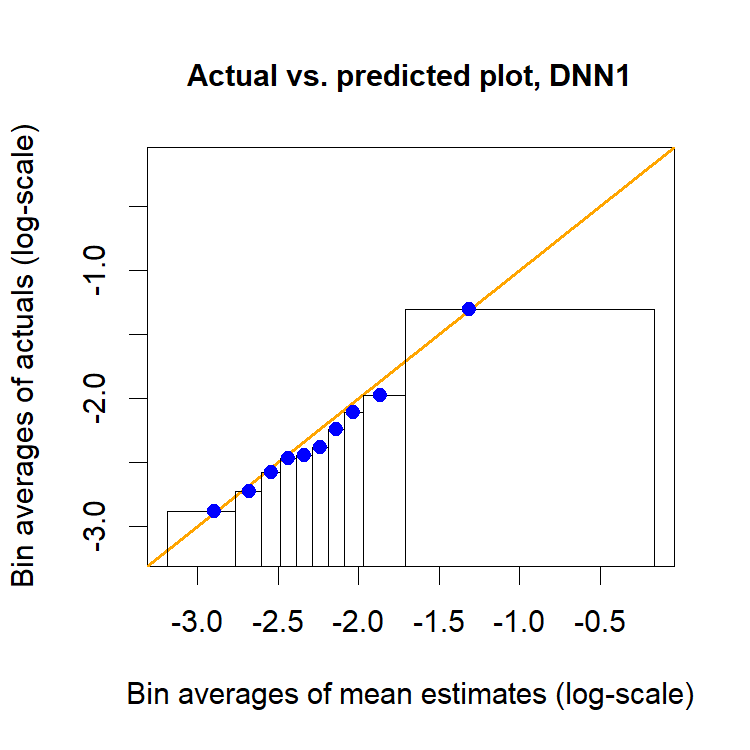}
\end{minipage}
\hspace{0.03\textwidth}
\begin{minipage}{0.4\textwidth}
\centering
\includegraphics[width=\linewidth]{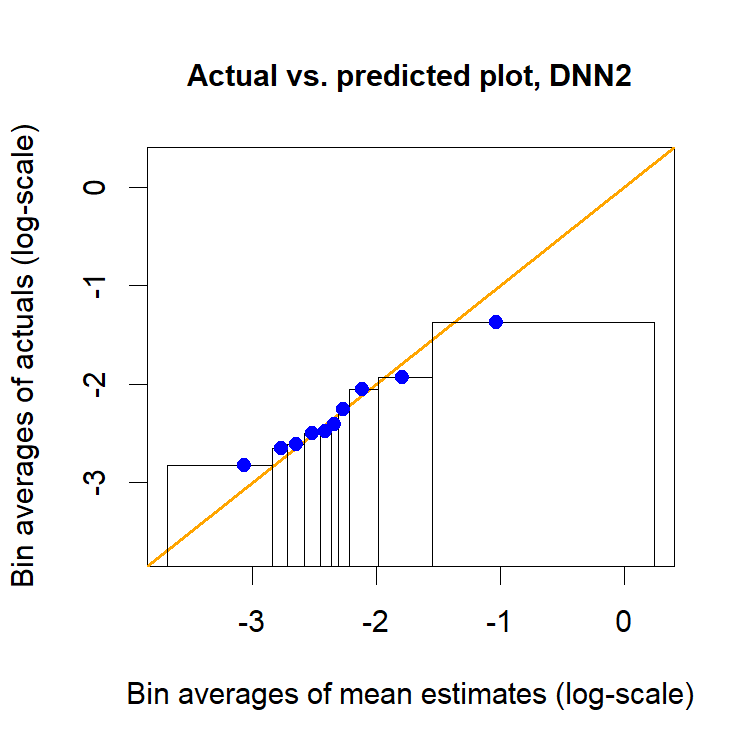}
\end{minipage}

\vspace{0.3cm}

\begin{minipage}{0.4\textwidth}
\centering
\includegraphics[width=\linewidth]{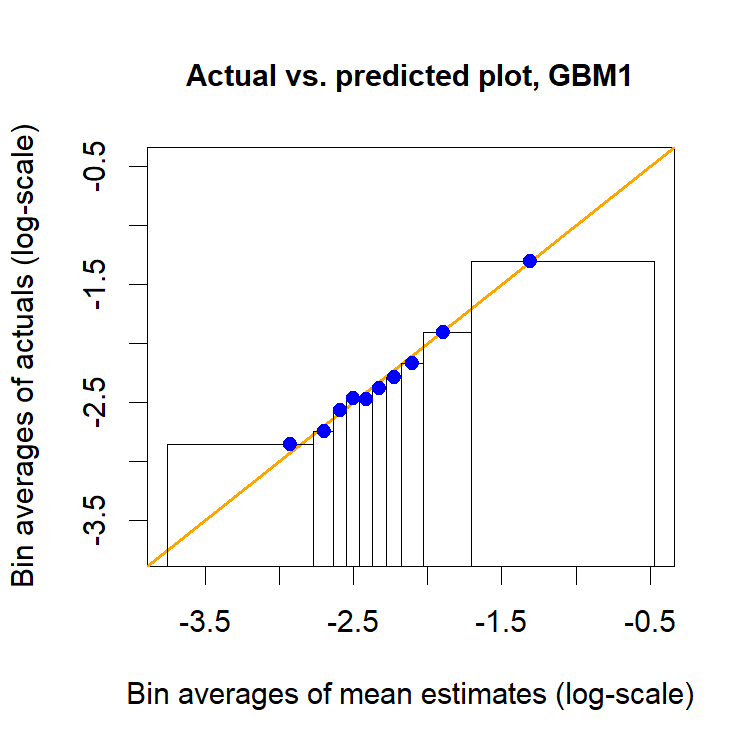}
\end{minipage}
\hspace{0.03\textwidth}
\begin{minipage}{0.4\textwidth}
\centering
\includegraphics[width=\linewidth]{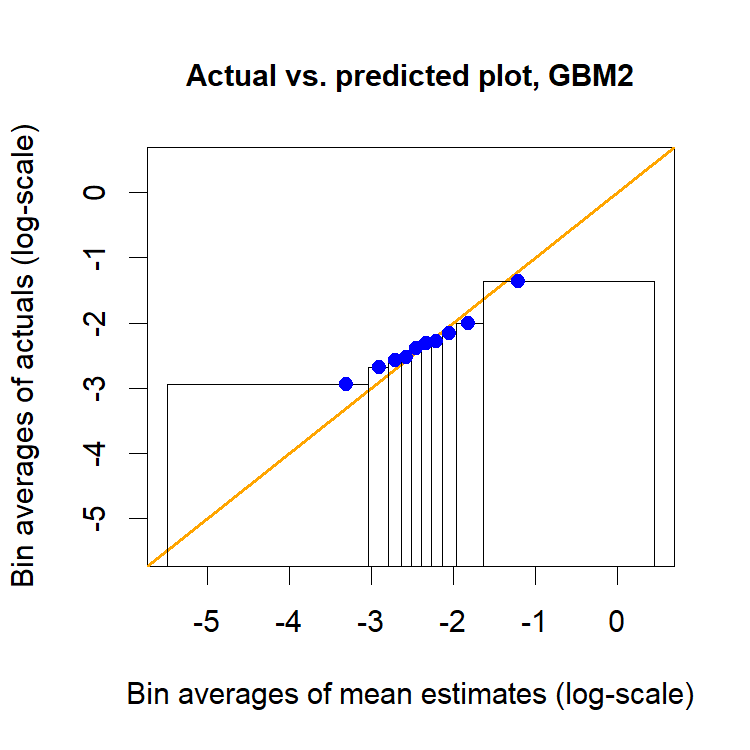}
\end{minipage}

\captionof{figure}{
Plots of binned averages of responses (actuals) against binned averages of mean estimates on the test set $\T$ for eight different models. Only the case where the size of the learning set corresponds to $60\%$ of the dataset $\D$ is considered. Bin intervals are shown as rectangles.
}
\label{new_Fig3}

\end{center}

A model violates auto-calibration whenever the corresponding blue points in Figure \ref{new_Fig3} deviate significantly from the diagonal. Such deviations occur for nearly all models, with the exception of the true model, the homogeneous mean, and the model GBM1. The actual vs.~predicted plots thus corroborate the conclusions of our test, except for the model GBM1. A possible reason for this is that violations of auto-calibration for the latter model may occur more locally, within the bins themselves.

Next, we consider the isotonic recalibrated versions of the models GLM, GAM, DNN1, DNN2, GBM1 and GBM2. \textit{Isotonic recalibration} is a method used in Wüthrich--Ziegel \cite{Wuethrich_Ziegel} in order to restore the auto-calibration of a given regression function $\widehat{\mu}_\l : \X \to \R$. It consists in ranking the realizations of the learning set $\l = (y_i, \bx_i, v_i)_{i \in \LL}$ such that the mean estimates are non-decreasing, i.e., such that
\begin{equation}
    \label{isotonic ranking}
    \widehat{\mu}_\l(\bx_1) \leq \widehat{\mu}_\l(\bx_2) \leq \dots \leq \widehat{\mu}_\l(\bx_{|\mathcal{L}|-1}) \leq \widehat{\mu}_\l(\bx_{|\mathcal{L}|}), 
\end{equation}
and fitting an isotonic regression on the underlying responses. This leads to a vector 
\begin{equation*}    
    %\label{isotonic estimator}
    \widehat{\bmu}_\l^{\textrm{Iso}} = \mathop{\arg \min}\limits_{\bmu \in \R^n} \Big\{ \sum_{i=1}^n v_i(y_i-\mu_i)^2: \mu_1 \leq \dots \leq \mu_n \Big\}.
\end{equation*}
providing recalibrated mean estimates for $(\bx_i)_{i=1}^{|\mathcal{L}|}$.
%as the i-th component of the vector
%\begin{equation*}    
    %\label{isotonic estimator}
    %\widehat{\bmu}_\l^{\textrm{Iso}} = \mathop{\arg \min}\limits_{\bmu \in \R^n} \Big\{ \sum_{i=1}^n v_i(y_i-\mu_i)^2: \mu_1 \leq \dots \leq \mu_n \Big\}.
%\end{equation*}
The extension to the whole feature space $\X$ is then done using a step function interpolation; we refer to Section 2 in Wüthrich--Ziegel \cite{Wuethrich_Ziegel} for an extended description about this method. As isotonic recalibration leads to empirically auto-calibrated regression functions by construction, see Wüthrich--Ziegel \cite{Wuethrich_Ziegel}, our aim is to determine whether this property also holds on out-of-sample data. The results of the performed tests at a confidence level of $1-\alpha = 0.95$ are given in Table \ref{Tab3}, where a comparison with the test of Denuit et al.~\cite{Denuit2} is provided again.

%It consists in fitting an isotonic regression on the responses by taking the regression function itself as a ranking function. This leads to an empirically auto-calibrated regression function on $\l$. The results of our test as well as the test of Denuit et al.~\cite{Denuit} are provided in Table \ref{Tab3}.

\begin{table}[htb!]
  \begin{center}
 {\small   
\begin{tabular}{|l||c|c|}
\hline
  Models & \begin{tabular}{c}
 $p$-values of the \\
 test of Section \ref{sec assessing auto calibration}
\end{tabular} & \begin{tabular}{c}
 $p$-values of the\\
 test of Denuit et al.~\cite{Denuit2}
\end{tabular}\\
  \hline\hline
{\it Case 1.} $\mathcal L : 60\%$, $\B : 20\%$, $\T : 20\%$ &- &-\\
\hline
Recalibrated GLM & $7.98 \cdot 10^{-2} $ &$5.84 \cdot 10^{-1}$\\
Recalibrated GAM & $1.43 \cdot 10^{-3}$ &$5.18 \cdot 10^{-1}$\\
Recalibrated DNN1 &$3.50 \cdot 10^{-1}$ &$6.36 \cdot 10^{-1}$\\
Recalibrated DNN2 &$1.25 \cdot 10^{-6}$ &$2.80 \cdot 10^{-2}$\\
Recalibrated GBM1 & $7.12 \cdot 10^{-3}$ &$9.60 \cdot 10^{-2}$\\
Recalibrated GBM2 & $1.30 \cdot 10^{-56}$ &0\\

 \hline \hline
{\it Case 2.} $\mathcal L : 80\%$, $\B : 10\%$, $\T : 10\%$ &- &-\\
\hline
Recalibrated GLM & $4.09 \cdot 10^{-3}$ &$3.96 \cdot 10^{-1}$\\
Recalibrated GAM & $9.68 \cdot 10^{-3}$ &$2.22 \cdot 10^{-1}$\\
Recalibrated DNN1 &$7.61 \cdot 10^{-3}$ &$2.86 \cdot 10^{-1}$\\
Recalibrated DNN2 &$4.10 \cdot 10^{-3}$ &$1.52 \cdot 10^{-1}$\\
Recalibrated GBM1 & $1.22 \cdot 10^{-1}$ &$3.06 \cdot 10^{-1}$\\
Recalibrated GBM2 & $6.54 \cdot 10^{-22}$ &$0$\\
\hline
\end{tabular}}
\end{center}
\caption{$p$-values of the  auto-calibration tests for six different isotonic recalibrated models and two different cases. The auto-calibration test of Section \ref{sec assessing auto calibration} is performed on the test set $\T$, whereas the test of Denuit et al.~\cite{Denuit2} is performed using 500 Monte Carlo simulations on the set $\D \setminus\mathcal{L}$.}
\label{Tab3}
\end{table}

For the six new isotonic recalibrated models under consideration, only the auto-calibration of three models does not get rejected by our test at a confidence level of $1-\alpha = 0.95$. These models are the isotonic recalibrated GLM and the isotonic recalibrated DNN1 fitted on $60 \%$ of the data as well as the isotonic recalibrated GBM1 fitted on $80 \%$ of the data. We emphasize, however, that the resulting $p$-values are much higher than in Table \ref{Tab2}, showing that isotonic recalibration helps to partly restore auto-calibration for the initially fitted regression functions on out-of-sample data. Interestingly, those $p$-values remain low for the recalibrated overfitting models DNN2 and GBM2. This is due to a ranking of the means inferred from the learning set in \eqref{isotonic ranking} that is too different from the ranking given by the true conditional means, causing the empirical auto-calibration property to fail on the test set. Moreover, we emphasize that the test of Denuit et al.~\cite{Denuit2} is again less powerful for the considered recalibrated regression functions. 

In Figure \ref{new_Fig4}, we provide the actual vs.~predicted plots for two models. These plots show how isotonic recalibration helps to significantly improve the auto-calibration of the GLM, whereas it fails to do so for the model GBM2.

\begin{figure}[htb!]
\centering
\begin{minipage}{0.4\textwidth}
\centering
\includegraphics[width=\linewidth]{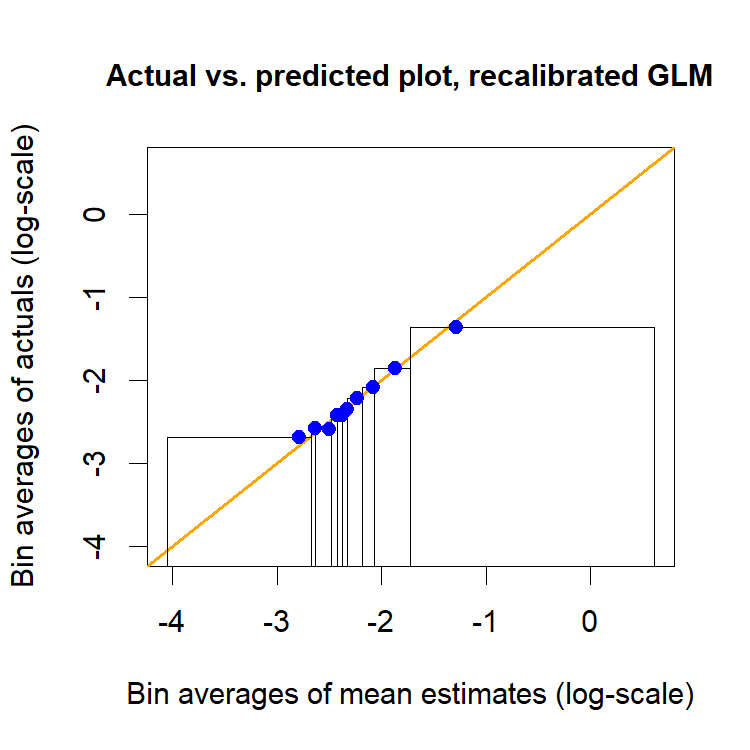}
\end{minipage}
\hspace{0.03\textwidth}
\begin{minipage}{0.4\textwidth}
\centering
\includegraphics[width=\linewidth]{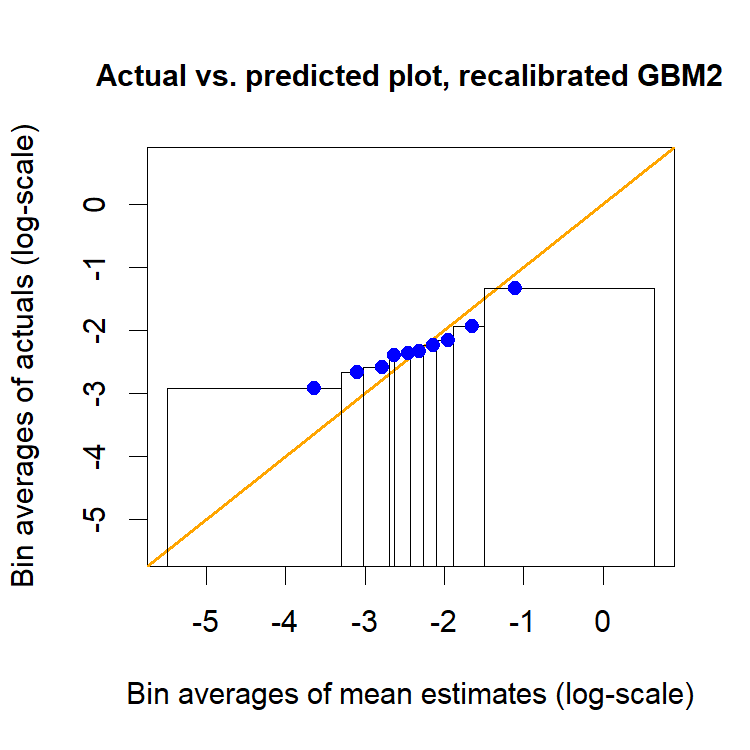}
\end{minipage}

\caption{
Plots of binned averages of responses (actuals) against binned averages of mean estimates on the test set $\T$ for two isotonic recalibrated models. Only the case where the size of the learning set corresponds to $60\%$ of the dataset $\D$ is considered. Bin intervals are shown as rectangles.
}
\label{new_Fig4}
\end{figure}
\subsection{Power of the tests}

In the two previous sections, we assessed the calibration and auto-calibration of various statistical models for the Swiss motor third liability insurance dataset in Wüthrich--Buser \cite{Wüthrich--Buser}. As this dataset was synthetically generated and the true conditional mean is available for all policies, one can in principle simulate different claims histories from the same true model. We use this to evaluate the power of our tests by generating $100$ different datasets. To this end, we keep the triplets of the learning set $\l = (y_i, \bx_i, v_i)_{i \in \LL}$ fixed, while randomly reallocating the remaining policies between the boosting set $\B$ and the test set $\T$ at each simulation. In addition, the numbers of claims associated with the policies in $\B$ and $\T$ are resimulated each time. This results in $100$ new datasets of $500,000$ policies, and since the learning set was fixed, we do not fit again any model, i.e., we evaluate the calibration and auto-calibration of the same regression functions as in Section \ref{sec considered models}.

Following the testing procedures in Sections \ref{sec num example test cal} and \ref{sec num example test auto cal}, we fit benchmark GBMs using Poisson boosting trees on $\l$ and $\B$ for each of the simulated dataset. The average number of boosting steps used to fit these models are provided in Table \ref{Tab7}, where we additionally show the average Poisson deviance losses on $\l$ and $\T$ as well as the average empirical Kullback-Leibler distances with respect to the true conditional mean attained by the benchmark GBMs. These GBMs allow us to define test functions for assessing calibration and auto-calibration, see Sections \ref{sec num example test cal} and \ref{sec num example test auto cal}. The number of rejections of the null-hypothesis of calibration at a confidence level of $1-\alpha = 0.95$ are given in Table \ref{Tab4} for all models. There, we additionally show the values of the average test statistics $T_n^{\textrm{cal}}$, as well as the $p$-values corresponding to these averaged statistics.

\begin{table}[htb!]
  \begin{center}
 {\small   
\begin{tabular}{|l||c|c|c|c|c|}
\hline
  Models & Avg.~boost.~steps & $\bar{L}(\l, \widehat{\mu}^\textrm{boost}_{\l, \B})$ & $\bar{L}(\T, \widehat{\mu}^\textrm{boost}_{\l, \B})$ & $\overline{\textrm{KL}}_{\mu^{*}}(\T,\widehat{\mu}^\textrm{boost}_{\l, \B})$ \\
  \hline\hline
{\it Case 1.} $\mathcal L : 60\%$, $\V : 20\%$, $\T : 20\%$ & -&- &- &- \\
\hline
Benchmark GBM & 191.17 & 27.726 & 27.823 & 0.073 \\
 \hline \hline
{\it Case 2.} $\mathcal L : 80\%$, $\V : 10\%$, $\T : 10\%$ &- &- &- &-\\
\hline
Benchmark GBM & 217.06 & 27.761 & 27.837 & 0.067 \\
\hline
\end{tabular}}
\end{center}
\caption{Average number of boosting steps used to fit the benchmark GBMs for $100$ simulations. The average Poisson deviance losses and the average empirical Kullback-Leibler distances are reported in $10^{-2}$.}
\label{Tab7}
\end{table}

\begin{table}[htb!]
  \begin{center}
 {\small   
\begin{tabular}{|l||c|c|c|c|}
\hline
  Models &Rejections & Avg.~test stat.~$T_n^{\textrm{cal}}$ & $p$-values & $\overline{\textrm{KL}}_{\mu^{*}}(\T,\widehat{\mu}_{\l})$\\
  \hline\hline
{\it Case 1.} $\mathcal L : 60\%$, $\B : 20\%$, $\T : 20\%$ &- & - & - & -\\
\hline
True Model & 2/100 & $-0.077$ & $9.39 \cdot 10^{-1}$ & $0$\\
Homogeneous mean & 100/100 & $23.798$ & $3.49 \cdot 10^{-125}$ & $0.667$\\
GLM & 100/100 & $13.206$ & $8.14 \cdot 10^{-40}$ & $0.243$\\
GAM & 100/100 & $11.783$ &$4.74 \cdot 10^{-32}$ & $0.219$\\
DNN1 & 100/100 & $8.969$ & $2.98 \cdot 10^{-19}$  & $0.141$\\
DNN2 & 100/100 & $13.901$ & $6.22 \cdot 10^{-44}$   & $0.248$\\
GBM1 & 100/100 & $7.231$ & $4.78 \cdot 10^{-13}$   & $0.119$\\
GBM2 & 100/100 & $13.926$ & $4.40 \cdot 10^{-44}$   & $0.236$\\
\hline
Recalibrated GLM & 100/100 & $13.291$ & $2.61 \cdot 10^{-40}$ & $0.244$\\
Recalibrated GAM & 100/100 & $11.307$ &$1.21 \cdot 10^{-29}$ & $0.205$\\
Recalibrated DNN1 & 100/100 & $7.868$ & $3.60 \cdot 10^{-15}$   & $0.131$\\
Recalibrated DNN2 & 100/100 & $11.613$ & $3.54 \cdot 10^{-31}$   & $0.201$\\
Recalibrated GBM1 & 100/100 & $8.006$ & $1.19 \cdot 10^{-15}$   & $0.133$\\
Recalibrated GBM2 & 100/100 & $18.469$ & $3.68 \cdot 10^{-76}$  & $0.381$\\
 \hline \hline
{\it Case 2.} $\mathcal L : 80\%$, $\B : 10\%$, $\T : 10\%$ &- & - & - &-\\
\hline
True Model & 3/100 & $0.101$ & $9.19 \cdot 10^{-1}$ & $0$\\
Homogeneous mean & 100/100 & $16.926$ & $2.90 \cdot 10^{-64}$ & $0.672$\\
GLM & 100/100 & $9.364$ & $7.65 \cdot 10^{-21}$ & $0.241$\\
GAM & 100/100 & $8.499$ &$1.91 \cdot 10^{-17}$ & $0.217$\\
DNN1 & 100/100 & $5.812$ & $6.16 \cdot 10^{-9}$   & $0.082$\\
DNN2 & 100/100 & $7.910$ & $2.58 \cdot 10^{-15}$   & $0.198$\\
GBM1 & 100/100 & $3.228$ & $1.25 \cdot 10^{-3}$   & $0.129$\\
GBM2 & 100/100 & $8.858$ & $8.17 \cdot 10^{-19}$   & $0.171$\\
\hline
Recalibrated GLM & 100/100 & $9.500$ & $2.10 \cdot 10^{-21}$ & $0.242$\\
Recalibrated GAM & 100/100 & $8.122$ &$4.60 \cdot 10^{-16}$ & $0.203$\\
Recalibrated DNN1 & 100/100 & $6.000$ & $1.95 \cdot 10^{-9}$   & $0.089$\\
Recalibrated DNN2 & 100/100 & $7.638$ & $2.12 \cdot 10^{-14}$  & $0.306$\\
Recalibrated GBM1 & 100/100 & $3.792$ & $1.49 \cdot 10^{-4}$  & $0.131$ \\
Recalibrated GBM2 & 100/100 & $11.626$ & $3.04 \cdot 10^{-31}$  & $0.166$\\

\hline
\end{tabular}}
\end{center}
\caption{Number of rejections of calibration for $100$ different simulated datasets. The average test statistics $T_n^{\textrm{cal}}$, along with the corresponding $p$-values, are provided for each model. The average empirical Kullback-Leibler distances are reported in $10^{-2}$.}
\label{Tab4}
\end{table}

As in Section \ref{sec num example test cal}, we see that the only model for which the null-hypothesis of calibration does not systematically get rejected is the true conditional mean. The latter rejection rate is close to the significance level of $5 \%$ and we can see in Figure \ref{Fig:Emp Density Cal} that the empirical density of the test statistics for the true model is close to the density of a standard normal random variable for both considered cases. 

\begin{figure}[htb!]
\begin{center}
\begin{minipage}[t]{0.4\textwidth}
\begin{center}
\includegraphics[width=\textwidth]{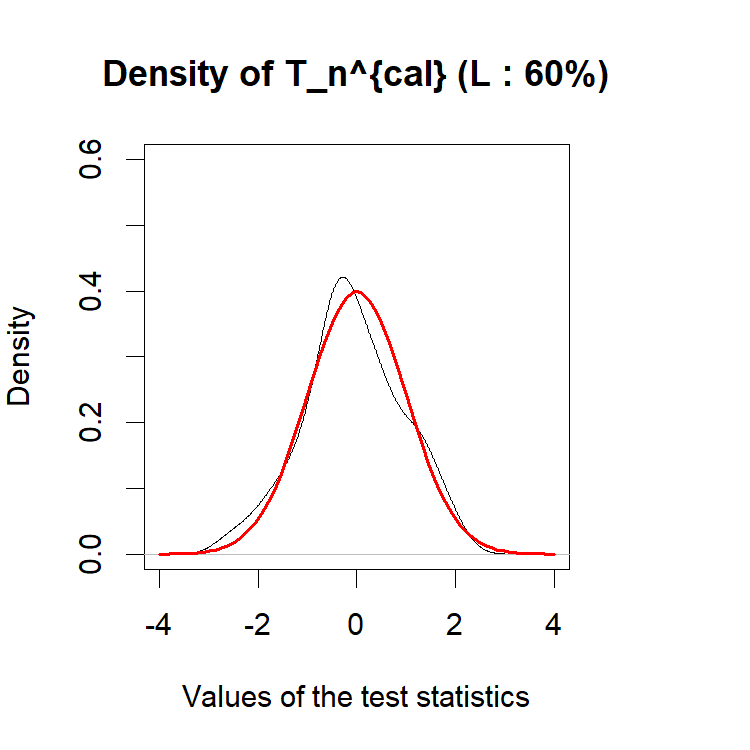}
\end{center}
\end{minipage}
\begin{minipage}[t]{0.4\textwidth}
\begin{center}
\includegraphics[width=\textwidth]{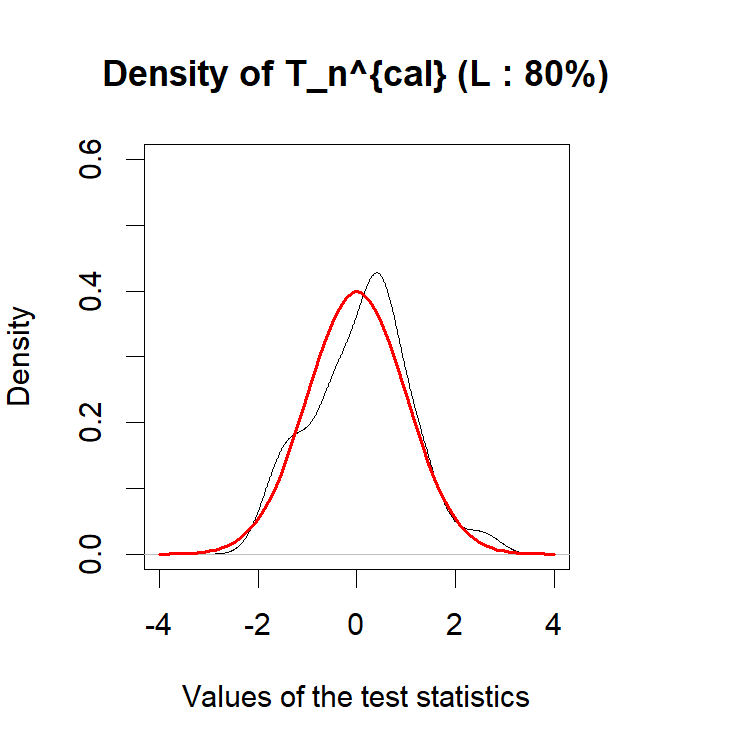}
\end{center}
\end{minipage}
\end{center}
\vspace{-.7cm}
\caption{The empirical density of the test statistics $T_n^{\textrm{cal}}$ for the true conditional mean (black) is plotted next to the theoretical density of the test statistics under the null-hypothesis of calibration (red).}
\label{Fig:Emp Density Cal}
\end{figure}

\begin{figure}[b!]
\begin{center}
\begin{minipage}[t]{0.4\textwidth}
\begin{center}
\includegraphics[width=\textwidth]{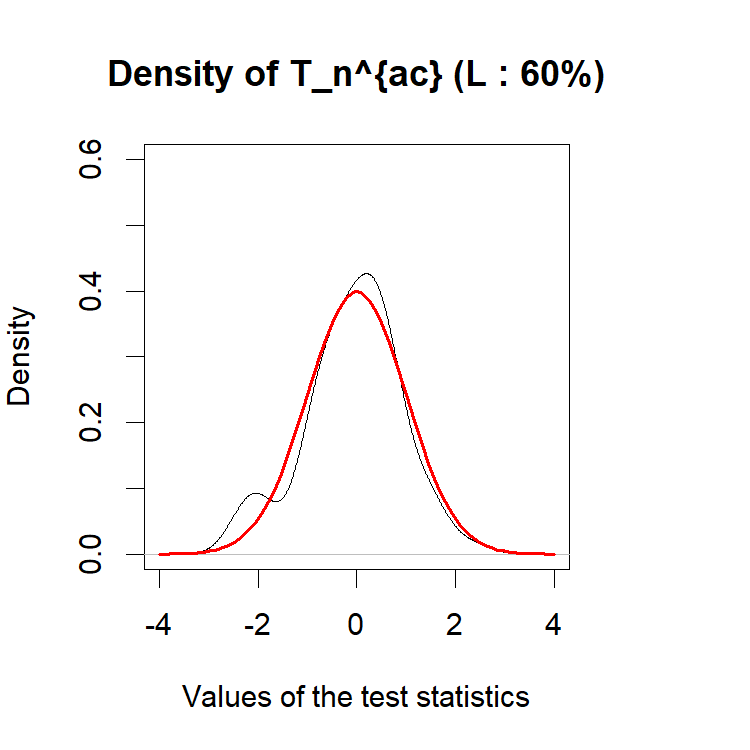}
\end{center}
\end{minipage}
\begin{minipage}[t]{0.4\textwidth}
\begin{center}
\includegraphics[width=\textwidth]{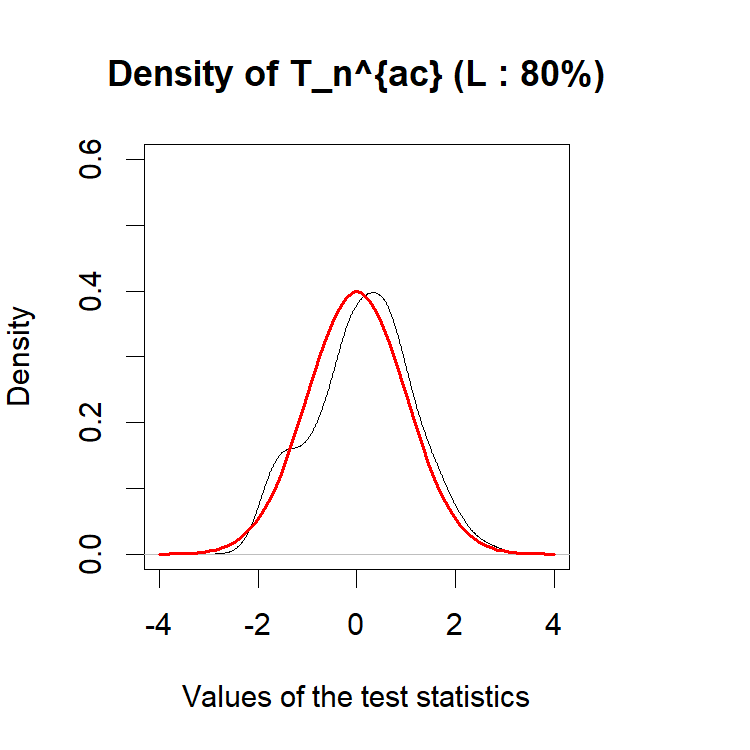}
\end{center}
\end{minipage}
\end{center}
\vspace{-.7cm}
\caption{The empirical density of the test statistics $T_n^{\textrm{ac}}$ for the true conditional mean (black) is plotted next to the theoretical density of the test statistics under the null-hypothesis of auto-calibration (red).}
\label{Fig:Emp Density Auto cal}
\end{figure}

Table \ref{Tab4} further shows that the true model is the only one for which the average of the test statistics is close to $0$, whereas this value is positive, as expected, for all the other models, see Section \ref{sec selecting test function}. Additionally, note that the ranking of the accuracy of the models induced by the $p$-values in Table \ref{Tab4} is very similar to the ranking provided by the average empirical Kullback-Leibler distances. This shows that our test is able to detect violations of calibration for all the models under consideration, and this is not surprising as the benchmark GBMs used for each simulation achieve a higher predictive performance, see Table \ref{Tab7}.

We repeat the same procedure to assess auto-calibration at a confidence level of $1-\alpha = 0.95$. The results are given in Table \ref{Tab5}, where we see that the true conditional mean and the homogeneous mean get rejected at a rate being close to the significance level of $5 \%$. Moreover, we notice that the null-hypothesis of auto-calibration does not always get rejected for most isotonic recalibrated models. This seems reasonable as those are precisely the models that are empirically auto-calibrated by construction. As for calibration, most average values of the test statistics $T_n^{\textrm{ac}}$ are positive in Table \ref{Tab5}, except for the homogeneous and true models. The empirical density of the test statistics for the latter model is provided in Figure \ref{Fig:Emp Density Auto cal}, and it seems again close to the density of a standard normal random variable.

\begin{table}[htb!]
  \begin{center}
 {\small   
\begin{tabular}{|l||c|c|c|}
\hline
  Models &Rejections & Avg.~test stat.~$T_n^{\textrm{ac}}$ & $p$-values \\
  \hline\hline
{\it Case 1.} $\mathcal L : 60\%$, $\B : 20\%$, $\T : 20\%$ &- & - & - \\
\hline
True Model & 1/100 & $-0.078$ & $9.38 \cdot 10^{-1}$ \\
Homogeneous mean & 4/100 & $-0.055$ & $8.16 \cdot 10^{-1}$ \\
GLM & 100/100 & $10.279$ & $8.75 \cdot 10^{-25}$ \\
GAM & 100/100 & $8.609$ &$7.38 \cdot 10^{-18}$\\
DNN1 & 100/100 & $6.874$ & $6.21 \cdot 10^{-12}$   \\
DNN2 & 100/100 & $11.378$ & $5.36 \cdot 10^{-30}$   \\
GBM1 & 100/100 & $6.074$ & $1.25 \cdot 10^{-9}$   \\
GBM2 & 100/100 & $11.420$ & $3.32 \cdot 10^{-30}$   \\
\hline
Recalibrated GLM & 46/100 & $1.827$ & $6.76 \cdot 10^{-2}$ \\
Recalibrated GAM & 32/100 & $1.481$ &$1.39 \cdot 10^{-1}$ \\
Recalibrated DNN1 & 65/100 & $2.342$ & $1.92 \cdot 10^{-2}$   \\
Recalibrated DNN2 & 98/100 & $4.146$ & $3.38 \cdot 10^{-5}$   \\
Recalibrated GBM1 & 96/100 & $3.603$ & $3.14 \cdot 10^{-4}$   \\
Recalibrated GBM2 & 100/100 & $15.357$ & $3.18 \cdot 10^{-53}$  \\
 \hline \hline
{\it Case 2.} $\mathcal L : 80\%$, $\B : 10\%$, $\T : 10\%$ &- & - & - \\
\hline
True Model & 3/100 & $0.124$ & $9.01 \cdot 10^{-1}$ \\
Homogeneous mean & 2/100 & $-0.004$ & $9.97 \cdot 10^{-1}$ \\
GLM & 100/100 & $7.558$ & $4.08 \cdot 10^{-14}$ \\
GAM & 100/100 & $6.244$ &$4.26 \cdot 10^{-10}$\\
DNN1 & 98/100 & $4.230$ & $2.33 \cdot 10^{-5}$   \\
DNN2 & 100/100 & $6.309$ & $2.80 \cdot 10^{-10}$   \\
GBM1 & 80/100 & $2.595$ & $9.46 \cdot 10^{-3}$   \\
GBM2 & 100/100 & $7.300$ & $2.81 \cdot 10^{-13}$   \\
\hline
Recalibrated GLM & 26/100 & $1.316$ & $1.88 \cdot 10^{-1}$ \\
Recalibrated GAM & 9/100 & $0.631$ &$5.28 \cdot 10^{-1}$ \\
Recalibrated DNN1 & 60/100 & $2.328$ & $1.99 \cdot 10^{-2}$   \\
Recalibrated DNN2 & 98/100 & $3.890$ & $1.00 \cdot 10^{-4}$   \\
Recalibrated GBM1 & 54/100 & $2.179$ & $2.94 \cdot 10^{-2}$  \\
Recalibrated GBM2 & 100/100 & $9.878$ & $5.18 \cdot 10^{-23}$  \\

\hline
\end{tabular}}
\end{center}
\caption{Number of rejections of auto-calibration for $100$ different simulated datasets. The average test statistics $T_n^{\textrm{ac}}$, along with the corresponding $p$-values, are provided for each model.}
\label{Tab5}
\end{table}

\subsection{Sensitivity of the tests}

We conclude this section by looking at the sensitivity of both tests for calibration and auto-calibration. For this, we consider five different regression functions defined through the convex linear combinations
\begin{equation}
    \label{eq delta}
    \widehat{\mu}_\l^\delta : \X \to \R, \quad \widehat{\mu}_\l^\delta(\bx) = \delta \widehat{\mu}_\l^{GLM}(\bx) + (1-\delta) \mu^*(\bx),
\end{equation}
where $\delta \in \{0, 1/4, 1/2, 3/4, 1\}$. We then assess the calibration and auto-calibration of these regression functions on the same $100$ simulated datasets as above and provide the number of rejections in Figure \ref{Fig:Sensitivity}. There, we see that the calibration of regression functions that are different, but close, to the true conditional mean is not systematically rejected and the same holds for auto-calibration. Moreover, the amount of rejections seems to be monotonically increasing in $\delta$ as one would expect from \eqref{eq delta}.

\begin{figure}[htb!]
\begin{center}
\begin{minipage}[t]{0.4\textwidth}
\begin{center}
\includegraphics[width=\textwidth]{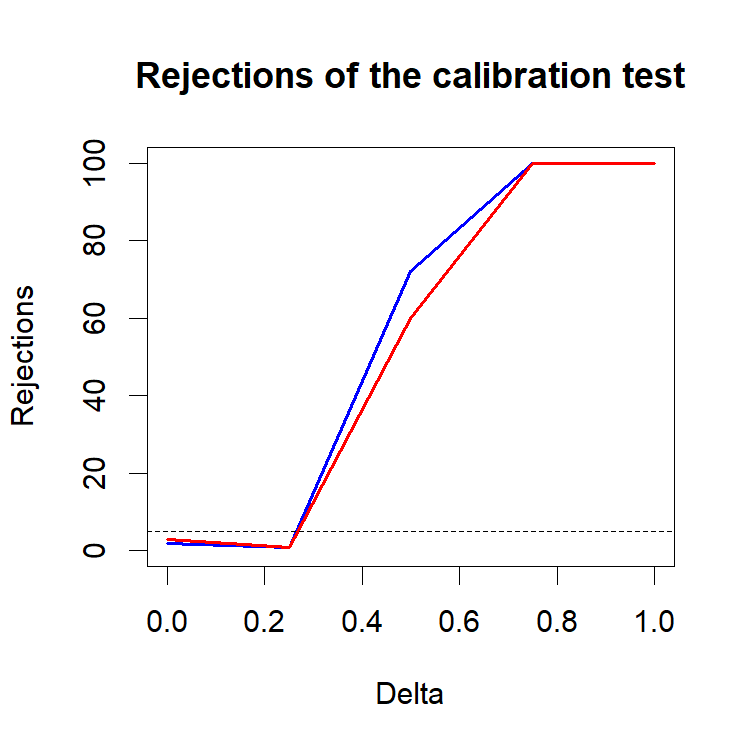}
\end{center}
\end{minipage}
\begin{minipage}[t]{0.4\textwidth}
\begin{center}
\includegraphics[width=\textwidth]{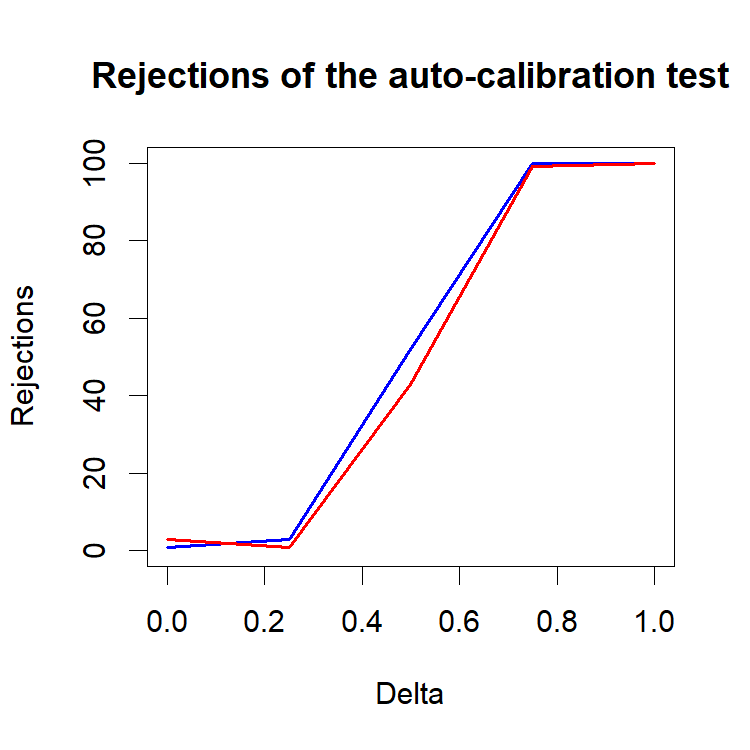}
\end{center}
\end{minipage}
\end{center}
\vspace{-.7cm}
\caption{Number of rejections of the null-hypothesis of calibration and auto-calibration, respectively, for $100$ different simulated datasets. The blue curves correspond to the case where the learning set corresponds to $60 \%$ of the data, whereas the red curves correspond to the other case. These are only evaluated at $\delta \in \{0, 1/4, 1/2, 3/4, 1\}$.}
\label{Fig:Sensitivity}
\end{figure}

This section, along with the previous one, shows that although we only assess a single necessary condition for calibration and auto-calibration, respectively, the proposed testing procedures are very powerful for the considered Swiss motor third party liability insurance. For instance, the calibration of a regression function was systematically rejected whenever it was not close enough to the true conditional mean, whereas the auto-calibration of a regression function did not always get rejected for the true conditional mean, empirically auto-calibrated regression functions or regression functions being close to the true conditional mean. Moreover, the rejection rate for models that did not violate calibration and auto-calibration, respectively, was close to the significance level of $5 \%$ in this example and the convergence results in Propositions \ref{prop theory cal} and \ref{prop theory auto cal} were proved to hold in Figures \ref{Fig:Emp Density Cal} and \ref{Fig:Emp Density Auto cal}.

\section{Conclusion}

This paper introduces new testing procedures to assess the calibration and auto-calibration of fitted regression functions. To this end, equivalent characterizations of these properties arising from the orthogonal projection of conditional expectation are used to derive necessary conditions. Although testing for necessary conditions generally leads to low statistical power, we show in this paper that boosting trees enable us to construct statistical tests based on a single necessary condition that are able to detect various kinds of violations of calibration and auto-calibration, respectively. This is supported by a numerical example, in which the proposed tests prove to be very powerful. In particular, we show that for calibration, our test outperforms the test of Delong et al.~\cite{Delong}, and for auto-calibration, it outperforms the test of Denuit et al.~\cite{Denuit2}.

The selection of the single necessary conditions to be assessed plays an important role in the power of our tests and our approach consists in fitting a benchmark gradient boosting model on the learning set and an additional set, that we call boosting set, in order to learn violations of calibration and auto-calibration for the originally fitted regression function. The use of boosting is motivated by the strong out-of-sample predictive performance achieved by such models. Going forward, it might be interesting to consider the use of other statistical models for this purpose as, for example, deep neural networks. Moreover, the selection of the necessary conditions to be assessed could in principle be done in other ways, but as pointed out in this paper, it seems important to use a method that adapts to the regression function under consideration in order to obtain powerful tests.

Finally, we emphasize that the idea of using a subpart of the dataset that was not used in the fitting stage to first identify violations of calibration or auto-calibration and, then, select a suitable test statistics could be used for other testing procedures as well. This is for example the case for the auto-calibration test of Denuit et al.~\cite{Denuit2}. which is based on a Kolmogorov-Smirnov type test statistics. Focusing on part of the portfolio where violations of the null-hypothesis are most likely to happen could significantly improve the power of such tests.

\bigskip

{\bf Acknowledgments.} The author thanks Mario Wüthrich for the valuable feedback and the useful remarks received on earlier versions of this work.

\bigskip

{\small %\baselineskip.5em
\renewcommand{\baselinestretch}{.51}
}

\newpage

\appendix

%\section{Parameter estimation and initialization}

\section{Proof of Proposition \ref{prop theory cal}}
The proof of Proposition \ref{prop theory cal} is given in this appendix. As the proof of Proposition \ref{prop theory auto cal} is perfectly similar, it is omitted.

\bigskip

{\Beweis {\bf Proof of Proposition \ref{prop theory cal}.}
    \iffalse
    The random variables $(Z_i)_{i=1}^n$ in \eqref{prop theory cal Z_i} are i.i.d.~because the pairs $(Y_i, \bX_i)_{i=1}^n$ were assumed to be i.i.d.. Moreover, under the null-hypothesis 
    $$
    \mathbb{H}_0 : \E[(Y-\mu(\bX)) g(\bX)] =0,
    $$
    we have $\E[Z_i] = 0$ for $1 \leq i \leq n$. Let
    $$
    \sigma^2 = \textrm{Var}\left[(Y-\mu(\bX)) g(\bX)\right] > 0.
    $$
    Using the central limit theorem (CLT), we obtain
    \begin{equation}
        \label{eq proof 1}
        \sqrt{n} \bar{Z} \stackrel{d}{\longrightarrow} \mathcal{N}(0, \sigma^2),
    \end{equation}
    as $n \to \infty$. Moreover, as $S_Z^2$ is a consistent estimator of the variance $\sigma^2$, we have
    \begin{equation}
    \label{eq proof 2}
    S_Z^2 \stackrel{p}{\longrightarrow} \sigma^2,
    \end{equation}
    as $n \to \infty$. Finally, the continuous mapping theorem and Slutsky's theorem allow us to conclude from \eqref{eq proof 1} and \eqref{eq proof 2} that
    \begin{equation*}
        T = \frac{\bar{Z}}{\sqrt{S_Z^2/n}} \stackrel{d}{\longrightarrow} \mathcal{N}(0,1),
    \end{equation*}
    as $n \to \infty$.
    \fi
    
    The random variables $(Z_i)_{i=1}^n$ in \eqref{prop theory cal Z_i} are conditionally i.i.d.~given $\B$ because the pairs $(Y_i, \bX_i)_{i \in \TT}$ were assumed to be i.i.d.~and $\B$ is independent of $\T$. Moreover, under the null-hypothesis of calibration of $\widehat{\mu}_\ell : \X \to \R$, we have
    $$
        \E[(Y - \widehat{\mu}_\ell(\bX)) g_{\ell, \B}(\bX) \, | \, \B] =0, \quad \p\textrm{-a.s.}
    $$
    Define
    $$
    \sigma^2(\B) = \textrm{Var}\left[(Y-\mu(\bX)) g_{\ell, \B}(\bX) \,| \, \B \right] > 0, \quad \p\textrm{-a.s.}
    $$
    Then, for almost every realization $\B = \bb$, we obtain using the central limit theorem (CLT),
    \begin{equation}
        \label{eq proof 5}
        \left.\sqrt{n} \bar{Z} \, \right| \, \B = \bb \stackrel{d}{\longrightarrow} \mathcal{N}(0, \sigma^2(\bb)),
    \end{equation}
    as $|\T| \to \infty$. Since $S_Z^2$ is a consistent estimator of the variance $\sigma^2(\bb)$, we further have
    \begin{equation}
    \label{eq proof 6}
    S_Z^2 \stackrel{p}{\longrightarrow} \sigma^2(\bb) > 0,
    \end{equation}
    as $|\T| \to \infty$ by the law of large numbers. Finally, the continuous mapping theorem and Slutsky's theorem allow us to conclude from \eqref{eq proof 5} and \eqref{eq proof 6} that
    \begin{equation}
        \label{limit cond V}
        T_n = \left.\frac{\bar{Z}}{\sqrt{S_Z^2/n}} \,\right| \, \B= \bb \stackrel{d}{\longrightarrow} \mathcal{N}(0,1),
    \end{equation}
    as $|\T| \to \infty$. As the above result holds for almost every realization $\B = \bb$ and the limiting distribution in \eqref{limit cond V} does not depend on $\bb$, we finally conclude that
    $$
    T_n \stackrel{d}{\longrightarrow} \mathcal{N}(0,1).
    $$
    This shows the claim.
\bigskip
\EndProof}

\newpage

\section{Comparison with the calibration test of Delong et al.}

\label{appendix cal bands}

In this appendix, we apply the calibration test of Delong et al.~\cite{Delong} to the models introduced in Section \ref{sec considered models}, and compare its conclusions with the results obtained in Section \ref{sec num example test cal}. This test is based on calibration bands and works as follows. To assess calibration, one has to plot the mean estimates provided by the regression function $\widehat{\mu}_\l : \X \to \R$ on the diagonal and reject the calibration of this regression function whenever some mean estimates fall outside the calibration band. These authors call such a plot a calibration plot. We construct calibration bands for all considered models at a confidence level of $1-\alpha = 0.95$ and show two of the resulting plots in Figure \ref{fig:Cal band 1}. As this method does not involve any boosting set $\B$, we emphasize that all bands were constructed using the data $\D \setminus \mathcal{L}$. Moreover, note that responses were binned according to their mean estimates as in Section 8.4 of Delong et al.~\cite{Delong}.

\begin{figure}[htb!]
\begin{center}
\begin{minipage}[t]{0.4\textwidth}
\begin{center}
\includegraphics[width=\textwidth]{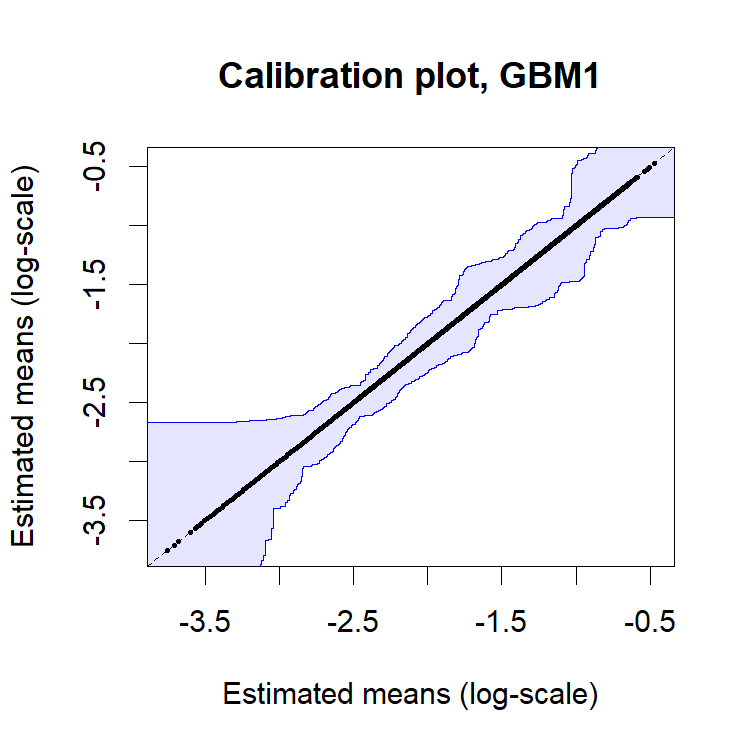}
\end{center}
\end{minipage}
\begin{minipage}[t]{0.4\textwidth}
\begin{center}
\includegraphics[width=\textwidth]{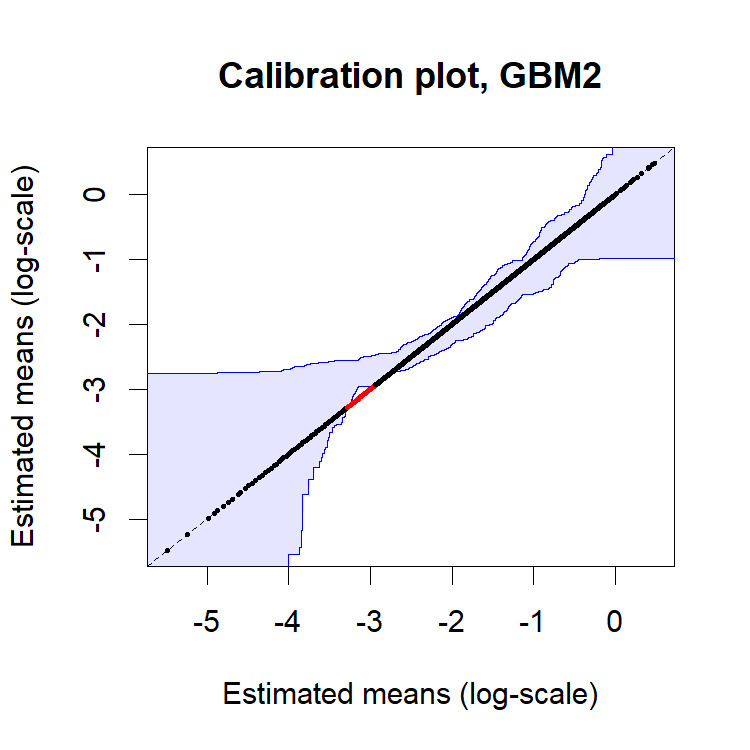}
\end{center}
\end{minipage}
\end{center}
\vspace{-.7cm}
\caption{Calibration plots of two different models. Only the case where the learning set corresponds to 60\% of the dataset $\D$ is considered. The mean estimates are plotted on the diagonal. The points falling within the band are drawn in black, whereas those falling outside the bands are drawn in red.}
\label{fig:Cal band 1}
\end{figure}

\makeatletter
\setlength{\@fptop}{0pt}
\makeatother

\begin{table}[!tb]
  \begin{center}
 {\small   
\begin{tabular}{|l||c|}
\hline
  Models & \begin{tabular}{c}
 Results of the \\
 test of Delong et al.~\cite{Delong}
\end{tabular}\\
  \hline\hline
{\it Case 1.} $\mathcal L : 60\%$, $\B : 20\%$, $\T : 20\%$ &- \\
\hline
True model &\ding{51}\\
Homogeneous mean & \ding{51}\\
GLM & \ding{51}\\
GAM & \ding{51}\\
DNN1 & \ding{51}\\
DNN2 & \ding{55}\\
GBM1 &\ding{51}\\
GBM2 &\ding{55}\\

 \hline \hline
{\it Case 2.} $\mathcal L : 80\%$, $\B : 10\%$, $\T : 10\%$ &- \\
\hline
True model &\ding{51}\\
Homogeneous mean &\ding{51}\\
GLM &\ding{51}\\
GAM &\ding{51}\\
DNN1 &\ding{51}\\
DNN2 &\ding{51}\\
GBM1 &\ding{51}\\
GBM2 &\ding{51}\\
\hline
\end{tabular}}
\end{center}
\caption{Results of the calibration test of Delong et al.~\cite{Delong}. A rejection of the null hypothesis of calibration is indicated by \ding{55}, whereas a non-rejection is indicated by \ding{51}.}
\label{Tab_new_appendix}
\end{table}

Figure \ref{fig:Cal band 1} shows that the calibration of the model GBM1 is not rejected by the test of Delong et al.~\cite{Delong}, whereas the calibration of the model GBM2 is rejected because some points lie outside the band. A summary of the results induced by this test is provided for all models in Table \ref{Tab_new_appendix}. There, we observe that the constructed calibration bands are only able to detect violations of calibration for the overfitting models DNN2 and GBM2 in the case where the learning set corresponds to $60 \%$ of the dataset $\D$. This can be explained by two different factors. On the one hand, calibration bands become narrower as more data is used to construct them, making violations of calibration easier to detect when the test set represents 40\% of the data, and, on the other hand, these models are among those who exhibiting the lowest empirical Kullback-Leibler distance with respect to the true conditional mean, see Table \ref{Tab1}. As responses are binned according to the underlying mean estimates in the construction of the bands, this method is not suitable to assess calibration of models that are much less granular than the true conditional mean. It is thus not surprising that the calibration of the homogeneous model does not get rejected by this test.

\end{document}